\newtheorem{theorem}{Theorem}[section]
\newtheorem{definition}[theorem]{Definition}
\newtheorem{lemma} [theorem]{Lemma}
\newtheorem{proposition}[theorem]{Proposition}
\newtheorem{remark}[theorem]{Remark}
\title{This is the title}
\begin{document}
\begin{center}
{\bf{TOWARDS CHARACTERIZATIONS OF  APPROXIMATE SCHAUDER FRAME AND ITS DUALS FOR BANACH SPACES}}\\
K. MAHESH KRISHNA AND P. SAM JOHNSON  \\
Department of Mathematical and Computational Sciences\\ 
National Institute of Technology Karnataka (NITK), Surathkal\\
Mangaluru 575 025, India  \\
Emails: kmaheshak@gmail.com,  sam@nitk.edu.in\\

Date: \today
\end{center}

\hrule
\vspace{0.5cm}
\textbf{Abstract}: We begin the study of characterizations of recently defined  approximate Schauder frame (ASF) and its duals for separable Banach spaces. We show that, under some conditions,  both ASF and its dual frames can be characterized for Banach spaces. We also give an operator-theoretic characterization for similarity of ASFs. Our results encode the results of Holub, Li,  Balan, Han, and Larson. We also address orthogonality of ASFs.

\textbf{Keywords}:  Frame, Approximate Schauder Frame.

\textbf{Mathematics Subject Classification (2020)}: 42C15, 46B25.


\section{Introduction}
Let $\mathcal{H}$ be a separable Hilbert space. Classical definition of a frame for $\mathcal{H}$ as set out by Duffin and Schaeffer, in 1952 \cite{DUFFIN} reads as follows.
\begin{definition}\cite{DUFFIN}
A sequence $\{\tau_n\}_n$ in  $\mathcal{H}$ is said to be a
frame for $\mathcal{H}$ if there exist $a,b>0$ such that 
\begin{align}\label{FRAMEINEQUALITY}
a\|h\|^2 \leq \sum_{n=1}^\infty |\langle h, \tau_n\rangle|^2\leq b\|h\|^2, \quad \forall h \in \mathcal{H}.
\end{align}
Constants $a$ and $b$ are called as lower and upper frame bounds, respectively. If $a$ can take the value 0, then we say that $\{\tau_n\}_n$ is a Bessel sequence 
 for $\mathcal{H}$. 
\end{definition}
Given a collection $\{\tau_n\}_n$, in general, it is difficult to find $a$ and $b$ such that the two inequalities in (\ref{FRAMEINEQUALITY}) hold. Therefore, it is natural to ask whether there is a characterization for frame without using frame bounds. Orthonormal bases are the simplest and easiest sequences we can handle in a Hilbert space, so one attempts to obtain characterization using orthonormal bases. Since every separable Hilbert space is isometrically isomorphic to the standard Hilbert space $\ell^2(\mathbb{N})$ and the standard unit vectors $\{e_n\}_n$ form an orthonormal basis for $\ell^2(\mathbb{N})$, one can further ask whether frames can be characterized using $\{e_n\}_n$. This question was answered affirmatively by Holub in 1994 \cite{HOLUB} as follows.
\begin{theorem}\cite{HOLUB}\label{HOLUBTHEOREM}
A sequence $\{\tau_n\}_n$ in  $\mathcal{H}$ is a
frame for $\mathcal{H}$	if and only if there exists a surjective bounded linear operator $T:\ell^2(\mathbb{N}) \to \mathcal{H}$ such that $Te_n=\tau_n$, for all $n \in \mathbb{N}$.
\end{theorem}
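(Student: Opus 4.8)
The plan is to prove Holub's theorem (Theorem~\ref{HOLUBTHEOREM}) by exhibiting the natural analysis/synthesis operator machinery and checking the two directions separately.

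\textbf{Forward direction.} Suppose $\{\tau_n\}_n$ is a frame for $\mathcal{H}$ with bounds $a,b$. First I would introduce the analysis operator $\theta : \mathcal{H} \to \ell^2(\mathbb{N})$ defined by $\theta h = \{\langle h, \tau_n\rangle\}_n$; the upper frame inequality says exactly that $\theta$ is a well-defined bounded operator with $\|\theta\| \le \sqrt{b}$. Its adjoint $T := \theta^* : \ell^2(\mathbb{N}) \to \mathcal{H}$ is the synthesis operator, and a one-line computation on basis vectors gives $Te_n = \theta^* e_n = \tau_n$, since $\langle \theta^* e_n, h\rangle = \langle e_n, \theta h\rangle = \overline{\langle h,\tau_n\rangle} = \langle \tau_n, h\rangle$ for all $h$. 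It remains to see that $T$ is surjective. For this I would note that the frame operator $S = \theta^* \theta = T\theta : \mathcal{H}\to\mathcal{H}$ satisfies $aI \le S \le bI$ in the operator ordering (this is just a restatement of the frame inequality), hence $S$ is invertible; since $S = T\theta$ factors through $\operatorname{ran} T$, surjectivity of $S$ forces $T$ to be surjective as well.

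\textbf{Reverse direction.} Conversely, suppose there is a surjective bounded linear $T:\ell^2(\mathbb{N})\to\mathcal{H}$ with $Te_n = \tau_n$. For any $h\in\mathcal{H}$ and any $\{c_n\}_n\in\ell^2(\mathbb{N})$ we have $\langle \{c_n\}_n, T^*h\rangle = \langle T\{c_n\}_n, h\rangle$; taking $\{c_n\}_n = e_n$ shows $T^*h = \{\langle \tau_n, h\rangle\}_n = \{\overline{\langle h,\tau_n\rangle}\}_n$, so $\sum_n |\langle h,\tau_n\rangle|^2 = \|T^*h\|^2 \le \|T\|^2\|h\|^2$, giving the upper bound $b = \|T\|^2$. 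For the lower bound, surjectivity of $T$ (together with boundedness and the open mapping theorem) implies that $T^*$ is bounded below, i.e.\ there is $c>0$ with $\|T^*h\| \ge c\|h\|$ for all $h$; squaring yields $\sum_n|\langle h,\tau_n\rangle|^2 = \|T^*h\|^2 \ge c^2\|h\|^2$, so $a = c^2$ works.

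\textbf{Main obstacle.} The computational parts are routine Hilbert-space adjoint bookkeeping; the only substantive point is the equivalence ``$T$ surjective $\iff$ $T^*$ bounded below.'' In the reverse direction this is where the open mapping theorem (or equivalently the closed range theorem) enters, and in the forward direction one must be careful that invertibility of $S=T\theta$ really does force $T$ onto rather than merely $\operatorname{ran} T$ dense. I expect that packaging this equivalence cleanly — rather than any individual estimate — will be the part of the argument requiring the most care, and it is the conceptual heart that will later need to be replaced when passing from Hilbert spaces to Banach spaces and from $\ell^2$ to a general sequence space.
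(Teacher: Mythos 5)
Your proposal is correct: both directions are sound, the identification $T=\theta^*$ with $Te_n=\tau_n$, the invertibility of $S=T\theta$ from $aI\le S\le bI$ giving surjectivity, and the use of the open mapping/closed range fact that surjectivity of $T$ makes $T^*$ bounded below are all exactly the standard argument (the only blemish is an inconsequential conjugation slip in writing $T^*h$, which does not affect the norm identity $\|T^*h\|^2=\sum_n|\langle h,\tau_n\rangle|^2$). The paper itself gives no proof of this statement --- it is quoted from Holub \cite{HOLUB} --- and your argument is essentially the standard one underlying Theorem \ref{FFF}, so there is nothing further to reconcile.
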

There is a slight variation of Theorem \ref{HOLUBTHEOREM} given by Christensen. 
\begin{theorem}\cite{OLEBOOK}\label{OLECHA}
Let $\{\omega_n\}_n$ be an orthonormal basis for   $\mathcal{H}$. Then  a sequence $\{\tau_n\}_n$ in  $\mathcal{H}$ is a
frame for $\mathcal{H}$	if and only if there exists a surjective bounded linear operator $T:\mathcal{H} \to \mathcal{H}$ such that $T\omega_n=\tau_n$, for all $n \in \mathbb{N}$.	
\end{theorem}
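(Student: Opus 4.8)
The plan is to reduce Theorem \ref{OLECHA} to Theorem \ref{HOLUBTHEOREM} by transporting everything through the canonical unitary determined by the orthonormal basis $\{\omega_n\}_n$. First I would fix the operator $U\colon \ell^2(\mathbb{N}) \to \mathcal{H}$ defined on the standard unit vectors by $Ue_n = \omega_n$ and extended by linearity and continuity. Since $\{e_n\}_n$ is an orthonormal basis of $\ell^2(\mathbb{N})$ and $\{\omega_n\}_n$ is an orthonormal basis of $\mathcal{H}$, the map $U$ is a well-defined surjective isometry with bounded inverse $U^{-1} = U^{\ast}$. This is the only place where the orthonormal-basis hypothesis (and separability) is genuinely used.

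For the ``if'' direction, suppose $T\colon \mathcal{H} \to \mathcal{H}$ is bounded and surjective with $T\omega_n = \tau_n$ for all $n \in \mathbb{N}$. Then $T\circ U\colon \ell^2(\mathbb{N}) \to \mathcal{H}$ is bounded, being a composition of bounded operators, and surjective, being a composition of surjections, and it satisfies $(T\circ U)e_n = T\omega_n = \tau_n$ for all $n$. Theorem \ref{HOLUBTHEOREM} then yields that $\{\tau_n\}_n$ is a frame for $\mathcal{H}$.

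For the ``only if'' direction, suppose $\{\tau_n\}_n$ is a frame for $\mathcal{H}$. By Theorem \ref{HOLUBTHEOREM} there exists a bounded surjective operator $S\colon \ell^2(\mathbb{N}) \to \mathcal{H}$ with $Se_n = \tau_n$ for all $n$. Set $T \coloneqq S\circ U^{-1}\colon \mathcal{H} \to \mathcal{H}$. Then $T$ is bounded and surjective (composition with the invertible isometry $U^{-1}$ preserves both properties), and $T\omega_n = SU^{-1}\omega_n = Se_n = \tau_n$ for all $n$, which is exactly what is required.

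I do not expect a serious obstacle here; the argument is essentially a change of coordinates. The two points deserving care are (i) verifying that $U$ is genuinely unitary, which is standard but is the step that actually consumes the orthonormal-basis assumption, and (ii) checking that surjectivity is preserved under composition with $U^{\pm 1}$, which is immediate. If one wanted a self-contained proof bypassing Theorem \ref{HOLUBTHEOREM}, the substantive content would instead be the classical facts that the upper frame bound makes the synthesis operator $c \mapsto \sum_n c_n\tau_n$ bounded on $\ell^2(\mathbb{N})$ (equivalently, that the operator $T$ above is well-defined and bounded) and that the lower frame bound makes the frame operator invertible, hence $T$ surjective; that route is longer, and I would not take it given that Theorem \ref{HOLUBTHEOREM} is already available.
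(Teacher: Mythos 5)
Your argument is correct: since $\{\omega_n\}_n$ and $\{e_n\}_n$ are orthonormal bases, the operator $U$ with $Ue_n=\omega_n$ is unitary, and composing with $U^{\pm 1}$ transfers both boundedness and surjectivity, so the equivalence follows at once from Theorem \ref{HOLUBTHEOREM}. The paper itself gives no proof of Theorem \ref{OLECHA} (it is quoted from \cite{OLEBOOK}), and your reduction via the canonical unitary is exactly the standard way this variant is obtained from Holub's characterization, so there is nothing to object to.
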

Now we look into some fundamental properties of frames which are exhibited in the following theorem. 
\begin{theorem}\cite{DUFFIN, HANMEMOIRS}\label{FFF}
	Let $\{\tau_n\}_n$ be a frame for $\mathcal{H}$. Then
	\begin{enumerate}[\upshape(i)]
	\item The frame operator 
		$
		S_\tau :\mathcal{H} \ni h \mapsto \sum_{n=1}^\infty \langle h, \tau_n\rangle\tau_n\in
		\mathcal{H}
	$
	is  a well-defined  bounded linear, positive and invertible operator. Further, 
	\begin{align}\label{REP}
h=\sum_{n=1}^\infty \langle h, S_\tau^{-1}\tau_n\rangle \tau_n=\sum_{n=1}^\infty
		\langle h, \tau_n\rangle S_\tau^{-1}\tau_n, \quad \forall h \in
		\mathcal{H}.
		\end{align}
	\item $\{S_\tau^{-1}\tau_n\}_n$ is a frame for $\mathcal{H}$.
		\item The analysis operator 
		$
		\theta_\tau: \mathcal{H} \ni h \mapsto \{\langle h, \tau_n\rangle \}_n \in \ell^2(\mathbb{N})
		$
		is  a well-defined  bounded linear, injective operator.
		\item   Adjoint of $\theta_\tau$ 
		(synthesis operator) is given by 
	$
		\theta_\tau^*: \ell^2(\mathbb{N}) \ni \{a_n \}_n \mapsto \sum_{n=1}^\infty a_n\tau_n \in \mathcal{H}
		$
		which is surjective.
		\item Frame operator 
		splits as $S_\tau=\theta_\tau^*\theta_\tau.$
	\item $ P_\tau \coloneqq \theta_\tau S_\tau^{-1} \theta_\tau^*:\ell^2(\mathbb{N}) \to \ell^2(\mathbb{N})$ is an orthogonal  projection onto $ \theta_\tau(\mathcal{H})$.
\end{enumerate}

\end{theorem}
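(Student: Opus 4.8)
The plan is to verify the six assertions of Theorem \ref{FFF} in order, treating them as a cascade in which each statement feeds the next. The whole argument rests on the single structural observation that the frame inequality (\ref{FRAMEINEQUALITY}) is exactly the statement that the map $h \mapsto \{\langle h,\tau_n\rangle\}_n$ lands in $\ell^2(\mathbb{N})$ and is bounded below, so the natural strategy is to establish (iii) and (iv) first, then derive (v), (i), (ii), (vi) as formal consequences. So first I would show the analysis operator $\theta_\tau$ is well-defined with values in $\ell^2(\mathbb{N})$: the upper frame bound gives $\sum_n |\langle h,\tau_n\rangle|^2 \le b\|h\|^2 < \infty$, and linearity is immediate, so $\theta_\tau$ is bounded with $\|\theta_\tau\|^2 \le b$; injectivity follows from the lower bound, since $\theta_\tau h = 0$ forces $a\|h\|^2 \le 0$.

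Next I would compute the adjoint. For $h\in\mathcal{H}$ and $\{a_n\}_n \in \ell^2(\mathbb{N})$ one has $\langle \theta_\tau h, \{a_n\}_n\rangle = \sum_n \langle h,\tau_n\rangle \overline{a_n} = \langle h, \sum_n a_n \tau_n\rangle$, where the series $\sum_n a_n\tau_n$ converges in $\mathcal{H}$ (this is the one genuine convergence check: partial sums are Cauchy because, by duality with the bounded operator $\theta_\tau$, $\|\sum_{n=p}^q a_n\tau_n\| = \sup_{\|h\|\le 1}|\sum_{n=p}^q a_n\langle\tau_n,h\rangle| \le \|\theta_\tau\| (\sum_{n=p}^q|a_n|^2)^{1/2}$). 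Hence $\theta_\tau^* \{a_n\}_n = \sum_n a_n\tau_n$, proving (iv); surjectivity of $\theta_\tau^*$ follows once we know $S_\tau = \theta_\tau^*\theta_\tau$ is invertible, or directly from $\overline{\mathrm{ran}\,\theta_\tau^*} = (\ker\theta_\tau)^\perp = \mathcal{H}$ together with the lower bound forcing closed range. Then (v) is a one-line computation: $\langle \theta_\tau^*\theta_\tau h, h\rangle = \|\theta_\tau h\|^2 = \sum_n |\langle h,\tau_n\rangle|^2$, and polarization identifies $\theta_\tau^*\theta_\tau$ with the operator $S_\tau$ defined by the series in (i).

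For (i) I would argue that $S_\tau = \theta_\tau^*\theta_\tau$ is bounded (composition of bounded maps), self-adjoint and positive (it is of the form $T^*T$), and the frame inequality rewrites as $a\|h\|^2 \le \langle S_\tau h,h\rangle \le b\|h\|^2$, i.e. $aI \le S_\tau \le bI$; this sandwiching gives invertibility (e.g. $S_\tau$ is bounded below and has dense range, or directly $\|I - b^{-1}S_\tau\| \le 1 - a/b < 1$ so a Neumann series converges). The reconstruction formula (\ref{REP}) then comes from inserting $I = S_\tau^{-1}S_\tau = S_\tau S_\tau^{-1}$ and using the definition of $S_\tau$ together with self-adjointness of $S_\tau^{-1}$ to move it inside the inner product. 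Statement (ii) follows by checking that $S_{S_\tau^{-1}\tau}$, the frame operator of the sequence $\{S_\tau^{-1}\tau_n\}_n$, equals $S_\tau^{-1} S_\tau S_\tau^{-1} = S_\tau^{-1}$, which is positive and invertible, so $\{S_\tau^{-1}\tau_n\}_n$ satisfies a frame inequality with bounds $b^{-1}, a^{-1}$. Finally (vi): set $P_\tau = \theta_\tau S_\tau^{-1}\theta_\tau^*$; using $S_\tau = \theta_\tau^*\theta_\tau$ one checks $P_\tau^2 = \theta_\tau S_\tau^{-1}(\theta_\tau^*\theta_\tau)S_\tau^{-1}\theta_\tau^* = \theta_\tau S_\tau^{-1}\theta_\tau^* = P_\tau$, and $P_\tau^* = P_\tau$ since $S_\tau^{-1}$ is self-adjoint, so $P_\tau$ is an orthogonal projection; its range is $\theta_\tau(\mathcal{H})$ because $\theta_\tau$ is injective with closed range and $S_\tau^{-1}\theta_\tau^*$ is surjective onto $\mathcal{H}$, while $P_\tau$ fixes every vector $\theta_\tau h = \theta_\tau S_\tau^{-1}\theta_\tau^*\theta_\tau h$.

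I expect the main obstacle to be purely bookkeeping: making sure the two series $\sum_n \langle h,\tau_n\rangle \tau_n$ and $\sum_n a_n\tau_n$ are shown to converge \emph{before} they are manipulated, and that the identification $S_\tau = \theta_\tau^*\theta_\tau$ is justified via polarization rather than asserted. None of the steps is deep; the only place requiring care is the Cauchy estimate for $\sum_n a_n\tau_n$, where one must pass through the adjoint bound rather than trying to bound $\|a_n\tau_n\|$ termwise (which would need, and not have, control on $\|\tau_n\|$). Everything else is linear algebra with bounded operators plus the elementary spectral fact that a self-adjoint operator with $aI \le S \le bI$ and $a>0$ is invertible.
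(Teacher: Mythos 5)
Your proof is correct; the paper itself offers no proof of Theorem \ref{FFF} (it is quoted from \cite{DUFFIN, HANMEMOIRS}), and your argument is the standard one from the literature, in the natural order (iii), (iv), (v), (i), (ii), (vi), with the genuinely needed convergence check for $\sum_n a_n\tau_n$ done correctly via the duality estimate rather than termwise bounds. One small simplification: once $\theta_\tau^*\{a_n\}_n=\sum_{n=1}^\infty a_n\tau_n$ is established, substituting $a_n=\langle h,\tau_n\rangle$ gives $S_\tau h=\theta_\tau^*(\theta_\tau h)$ directly, which simultaneously yields the convergence of the series defining $S_\tau$ and the factorization in (v), so the appeal to polarization is unnecessary.
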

Previous theorem says that, given a vector $ h \in \mathcal{H}$, we need to consider sequences $\{\tau_n\}_n$ and $\{S_\tau^{-1}\tau_n\}_n$ to construct $h$ using Equation (\ref{REP}). It is well-known in the theory of frames that, in general, there are frames  $\{\omega_n\}_n$ for $  \mathcal{H}$ which will also give representation of element as given by $\{S_\tau^{-1}\tau_n\}_n$, namely, 
\begin{align}\label{DUAL}
h=\sum_{n=1}^\infty \langle h, \omega_n\rangle \tau_n=\sum_{n=1}^\infty
\langle h, \tau_n\rangle \omega_n, \quad \forall h \in
\mathcal{H}.
\end{align}
Equation (\ref{DUAL}) leads to the notion of dual frames as stated below. 
\begin{definition}\cite{LI}
Let $\{\tau_n\}_n$ be a frame for $\mathcal{H}$. A  frame  $\{\omega_n\}_n$ for $  \mathcal{H}$ is said to be a dual frame for $\{\tau_n\}_n$ if Equation (\ref{DUAL}) holds.
\end{definition}
Just like characterization of frames, given a frame,  we  seek a description of each of its dual frame. This problem was solved by Li in 1995 \cite{LI} in the following two lemmas and a theorem. 
\begin{lemma}\cite{LI}\label{LILEMMA1}
Let $\{\tau_n\}_n$ be a frame for $\mathcal{H}$ and $\{e_n\}_n$ be the standard orthonormal basis for 	$\ell^2(\mathbb{N})$. Then a frame $\{\omega_n\}_n$ is a dual frame for $\{\tau_n\}_n$ if and only if 
\begin{align*}
\omega_n=U e_n, \quad \forall n \in \mathbb{N},
\end{align*}
where $U:\ell^2(\mathbb{N})\to \mathcal{H}$ is a bounded left-inverse of $\theta_\tau$.
\end{lemma}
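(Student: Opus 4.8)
The plan is to translate the two equations defining a dual frame in (\ref{DUAL}) into operator identities between the analysis operator $\theta_\tau$ of $\{\tau_n\}_n$ and the synthesis operator $\theta_\omega^*$ of $\{\omega_n\}_n$, and then simply to observe that ``being a bounded left-inverse of $\theta_\tau$'' is exactly what these identities say. Throughout I use the operators of Theorem \ref{FFF}, applied to $\{\omega_n\}_n$ as well as to $\{\tau_n\}_n$.

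For the forward implication, suppose $\{\omega_n\}_n$ is a dual frame for $\{\tau_n\}_n$. Since $\{\omega_n\}_n$ is itself a frame, Theorem \ref{FFF} gives that its synthesis operator $\theta_\omega^*\colon \ell^2(\mathbb{N})\to\mathcal{H}$, $\{a_n\}_n\mapsto \sum_{n=1}^\infty a_n\omega_n$, is bounded, and clearly $\theta_\omega^* e_n=\omega_n$ for every $n$. The first equation in (\ref{DUAL}) reads $\sum_{n=1}^\infty\langle h,\tau_n\rangle\omega_n=h$ for all $h\in\mathcal H$, that is, $\theta_\omega^*\theta_\tau h=h$; hence $U:=\theta_\omega^*$ is a bounded left-inverse of $\theta_\tau$ with $Ue_n=\omega_n$. (The second equation in (\ref{DUAL}) is the adjoint statement $\theta_\tau^*\theta_\omega=I_{\mathcal H}$ and follows by taking adjoints, so only one of the two equations need be used.)

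For the converse, assume $\omega_n=Ue_n$ for some bounded left-inverse $U$ of $\theta_\tau$, i.e.\ $U\theta_\tau=I_{\mathcal H}$. First I would record the elementary identity $\langle h,\omega_n\rangle=\langle h,Ue_n\rangle=\langle U^*h,e_n\rangle$, which shows $\theta_\omega=U^*$ and $\theta_\omega^*=U$; in particular $\sum_{n=1}^\infty|\langle h,\omega_n\rangle|^2=\|U^*h\|^2\le\|U\|^2\|h\|^2$, so $\{\omega_n\}_n$ is Bessel, and since $\|h\|=\|\theta_\tau^*U^*h\|\le\|\theta_\tau^*\|\,\|U^*h\|$ (take adjoints in $U\theta_\tau=I_{\mathcal H}$), it is in fact a frame with lower bound $\|\theta_\tau^*\|^{-2}$ — so the hypothesis that $\{\omega_n\}_n$ be a frame is automatically satisfied. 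To verify the dual condition, use continuity of $U$ to pull it through the $\ell^2$-convergent series $\theta_\tau h=\sum_{n=1}^\infty\langle h,\tau_n\rangle e_n$, obtaining $\sum_{n=1}^\infty\langle h,\tau_n\rangle\omega_n=U\theta_\tau h=h$; the companion identity $\sum_{n=1}^\infty\langle h,\omega_n\rangle\tau_n=\theta_\tau^*\theta_\omega h=\theta_\tau^*U^*h=h$ again follows by taking adjoints in $U\theta_\tau=I_{\mathcal H}$. Thus (\ref{DUAL}) holds and $\{\omega_n\}_n$ is a dual frame.

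The only points requiring a little care are the interchange of the bounded operator $U$ with the infinite sum — justified by continuity of $U$ together with $\ell^2$-convergence of $\theta_\tau h$ — and the verification of the lower frame bound in the converse, which is the step that genuinely uses the left-inverse property (rather than mere boundedness of $U$). Everything else is bookkeeping with the analysis and synthesis operators of Theorem \ref{FFF}, so I expect the lower-bound computation to be the main thing to pin down precisely.
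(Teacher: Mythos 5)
Your proof is correct; note that the paper does not prove Lemma \ref{LILEMMA1} itself (it is quoted from \cite{LI}, with the proof deferred to \cite{OLEBOOK}), and your argument --- reading the duality equations as $\theta_\omega^*\theta_\tau=I_{\mathcal H}$, taking $U=\theta_\omega^*$ in one direction and identifying $\theta_\omega=U^*$ in the other, with the left-inverse property yielding the lower frame bound --- is exactly the standard operator-theoretic proof given there. Only a cosmetic slip: the identity $\sum_{n=1}^\infty\langle h,\tau_n\rangle\omega_n=h$ is the second, not the first, equation in (\ref{DUAL}); the mathematics is unaffected.
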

\begin{lemma}\cite{LI}\label{LILEMMA2}
Let $\{\tau_n\}_n$ be a frame for $\mathcal{H}$. Then $L:\ell^2(\mathbb{N})\to \mathcal{H}$ 	is a bounded left-inverse of $\theta_\tau$ if and only if 
\begin{align*}
L=S_\tau^{-1}\theta_\tau^*+V(I_{\ell^2(\mathbb{N})}-\theta_\tau S_\tau^{-1} \theta_\tau^*),
\end{align*}
where $V:\ell^2(\mathbb{N})\to \mathcal{H}$ is a bounded operator.
\end{lemma}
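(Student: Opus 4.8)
The plan is to establish the two implications by direct computation with bounded operators, the only structural inputs being the factorization $S_\tau=\theta_\tau^*\theta_\tau$ and the invertibility (and self-adjointness) of $S_\tau$ recorded in Theorem \ref{FFF}.

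For the ``if'' part, I would suppose $L=S_\tau^{-1}\theta_\tau^*+V(I_{\ell^2(\mathbb{N})}-\theta_\tau S_\tau^{-1}\theta_\tau^*)$ with $V:\ell^2(\mathbb{N})\to\mathcal{H}$ bounded. Then $L$ is a sum of compositions of bounded operators, hence bounded, and composing with $\theta_\tau$ on the right and using $\theta_\tau^*\theta_\tau=S_\tau$ gives
\[
L\theta_\tau=S_\tau^{-1}\theta_\tau^*\theta_\tau+V\theta_\tau-V\theta_\tau S_\tau^{-1}\theta_\tau^*\theta_\tau=S_\tau^{-1}S_\tau+V\theta_\tau-V\theta_\tau S_\tau^{-1}S_\tau=I_{\mathcal{H}}+V\theta_\tau-V\theta_\tau=I_{\mathcal{H}},
\]
so $L$ is a bounded left-inverse of $\theta_\tau$.

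For the ``only if'' part, I would take an arbitrary bounded $L:\ell^2(\mathbb{N})\to\mathcal{H}$ with $L\theta_\tau=I_{\mathcal{H}}$ and simply substitute $V\coloneqq L$ into the claimed formula, checking that it reproduces $L$: using $L\theta_\tau=I_{\mathcal{H}}$,
\[
S_\tau^{-1}\theta_\tau^*+L\bigl(I_{\ell^2(\mathbb{N})}-\theta_\tau S_\tau^{-1}\theta_\tau^*\bigr)=S_\tau^{-1}\theta_\tau^*+L-(L\theta_\tau)S_\tau^{-1}\theta_\tau^*=S_\tau^{-1}\theta_\tau^*+L-S_\tau^{-1}\theta_\tau^*=L.
\]
Hence $L$ has the required form with $V=L$, which completes the argument.

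The computations are routine; the only point worth isolating is the structural observation behind the ``only if'' direction, namely that $S_\tau^{-1}\theta_\tau^*$ is itself a bounded left-inverse of $\theta_\tau$ (the canonical one), and that for any two left-inverses $L_1,L_2$ the difference $L_1-L_2$ annihilates $\theta_\tau(\mathcal{H})$ and therefore factors through $I_{\ell^2(\mathbb{N})}-P_\tau$, where $P_\tau=\theta_\tau S_\tau^{-1}\theta_\tau^*$ is the orthogonal projection onto $\theta_\tau(\mathcal{H})$ from Theorem \ref{FFF}(vi). This is precisely what makes the parametrization exhaustive, so I do not anticipate any genuine obstacle beyond careful bookkeeping of domains and codomains.
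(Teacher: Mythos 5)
Your proof is correct and follows essentially the same route the paper takes: while the paper only cites \cite{OLEBOOK} for Lemma \ref{LILEMMA2} itself, its proof of the Banach-space analogue (Lemma \ref{ASFLEMMA2}(ii)) is exactly your computation — verify the formula is a left-inverse using $\theta_\tau^*\theta_\tau=S_\tau$, and for the converse substitute $V\coloneqq L$ and use $L\theta_\tau=I_\mathcal{H}$ to recover $L$.
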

\begin{theorem}\cite{LI}\label{LITHM}
Let $\{\tau_n\}_n$ be a frame for $\mathcal{H}$. Then a frame $\{\omega_n\}_n$ is a dual frame for $\{\tau_n\}_n$ if and only if 
\begin{align*}
\omega_n=S_\tau^{-1} \tau_n+\rho_n-\sum_{k=1}^{\infty}\langle S_\tau^{-1} \tau_n, \tau_k\rangle \rho_k, \quad \forall  n \in \mathbb{N}, 
\end{align*}
where 	$\{\rho_n\}_n$ is a Bessel sequence for $\mathcal{H}$.
\end{theorem}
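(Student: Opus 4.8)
The plan is to deduce Theorem \ref{LITHM} from Lemmas \ref{LILEMMA1} and \ref{LILEMMA2}, since those two lemmas together already give a complete parametrization of dual frames in terms of an auxiliary bounded operator. First I would combine the two lemmas: a frame $\{\omega_n\}_n$ is dual to $\{\tau_n\}_n$ if and only if $\omega_n = Le_n$ where $L = S_\tau^{-1}\theta_\tau^* + V(I_{\ell^2(\mathbb{N})} - \theta_\tau S_\tau^{-1}\theta_\tau^*)$ for some bounded $V:\ell^2(\mathbb{N})\to\mathcal{H}$. Evaluating at $e_n$ and using $\theta_\tau^* e_n = \tau_n$ (from Theorem \ref{FFF}(iv)), the first term contributes $S_\tau^{-1}\tau_n$. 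So the whole content of the proof is to rewrite the second term $V(I - \theta_\tau S_\tau^{-1}\theta_\tau^*)e_n$ in the stated form and to check the correspondence between bounded operators $V$ and Bessel sequences $\{\rho_n\}_n$ is a genuine bijection (or at least that each side produces the other).

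Next I would make the substitution $\rho_n \coloneqq Ve_n$. Since $\{e_n\}_n$ is an orthonormal basis and $V$ is bounded, $\{\rho_n\}_n = \{Ve_n\}_n$ is automatically a Bessel sequence for $\mathcal{H}$ (its synthesis operator is exactly $V$, bounded); conversely, given any Bessel sequence $\{\rho_n\}_n$, the map $V:\{a_n\}_n \mapsto \sum_n a_n\rho_n$ is a well-defined bounded operator with $Ve_n = \rho_n$. This is the standard identification of Bessel sequences with bounded operators out of $\ell^2(\mathbb{N})$, and I would invoke it (it is implicit in Theorem \ref{FFF}(iv) applied to a Bessel sequence). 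Then I compute: $V\theta_\tau S_\tau^{-1}\theta_\tau^* e_n = V\theta_\tau S_\tau^{-1}\tau_n = V(\{\langle S_\tau^{-1}\tau_n, \tau_k\rangle\}_k) = \sum_{k=1}^\infty \langle S_\tau^{-1}\tau_n, \tau_k\rangle \rho_k$, using the definition of $\theta_\tau$ in Theorem \ref{FFF}(iii) and the expansion of $V$ on the basis. Hence $Le_n = S_\tau^{-1}\tau_n + \rho_n - \sum_{k=1}^\infty \langle S_\tau^{-1}\tau_n, \tau_k\rangle\rho_k$, which is precisely the claimed formula.

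For the two directions: if $\{\omega_n\}_n$ is a dual frame, Lemmas \ref{LILEMMA1} and \ref{LILEMMA2} supply the operator $V$, and setting $\rho_n = Ve_n$ gives the formula with a Bessel sequence. Conversely, given the formula with a Bessel sequence $\{\rho_n\}_n$, define $V$ as the synthesis operator of $\{\rho_n\}_n$; then $L = S_\tau^{-1}\theta_\tau^* + V(I - \theta_\tau S_\tau^{-1}\theta_\tau^*)$ is, by Lemma \ref{LILEMMA2}, a bounded left-inverse of $\theta_\tau$, and $\omega_n = Le_n$, so by Lemma \ref{LILEMMA1} $\{\omega_n\}_n$ is a dual frame. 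One should also note in passing that the $\omega_n$ so defined do form a frame — this follows because $L$ is a bounded surjection (it is a left-inverse of an injection with closed range, hence surjective) and then Theorem \ref{HOLUBTHEOREM} applies; alternatively it is part of the statement of Lemma \ref{LILEMMA1}.

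I expect no serious obstacle here, since the theorem is essentially a transcription of the two lemmas into sequence language. The one point requiring a little care is the legitimacy of the termwise manipulations — that $\sum_k \langle S_\tau^{-1}\tau_n,\tau_k\rangle\rho_k$ converges in $\mathcal{H}$ and equals $V$ applied to the $\ell^2$ sequence $\{\langle S_\tau^{-1}\tau_n,\tau_k\rangle\}_k$ — which is exactly the continuity of $V$ together with the fact that $\theta_\tau S_\tau^{-1}\tau_n \in \ell^2(\mathbb{N})$ (guaranteed by Theorem \ref{FFF}(iii)). The other mild subtlety is that the correspondence $V \leftrightarrow \{\rho_n\}_n$ need not be injective as stated (different $V$'s could give the same $\omega_n$ after projecting), but this does not affect the ``if and only if'': we only need that every dual frame arises from some Bessel sequence and every Bessel sequence yields a dual frame, both of which are immediate from the computation above.
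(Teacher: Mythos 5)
Your proposal is correct and follows essentially the same route the paper relies on: combine Lemmas \ref{LILEMMA1} and \ref{LILEMMA2} and translate the auxiliary operator $V$ into a Bessel sequence via the standard correspondence between Bessel sequences and bounded operators $\ell^2(\mathbb{N})\to\mathcal{H}$, exactly the bridge the paper invokes when deducing Theorem \ref{LITHM} as the Hilbert-space case of Theorem \ref{ALLDUAL} (and the proof of the cited result in \cite{OLEBOOK}). Your added remarks on surjectivity of $L$ and on the convergence of $\sum_k\langle S_\tau^{-1}\tau_n,\tau_k\rangle\rho_k$ are sound and complete the argument.
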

We  can  see proofs of Lemmas \ref{LILEMMA1}, \ref{LILEMMA2} and Theorem \ref{LITHM} in the excellent book \cite{OLEBOOK}. We again consider the frame $\{S_\tau^{-1}\tau_n\}_n$. Note that this frame is obtained by the action of an invertible operator $S_\tau^{-1}$ to the original frame $\{\tau_n\}_n$. This leads to the question: what are all the frames which are obtained by operating an invertible operator to the  given frame? This naturally brings us to the following definition. 
\begin{definition}\cite{BALAN}\label{SIMILARDEFHILBERT}
Two frames $\{\tau_n\}_n$ and $\{\omega_n\}_n$ for $  \mathcal{H}$ are said to be similar or equivalent if there exist bounded invertible operator $T:\mathcal{H} \to \mathcal{H}$ such that
\begin{align}\label{SIMILARITYEQUATION}
\omega_n=T \tau_n, \quad\forall n \in \mathbb{N}.
\end{align} 
\end{definition}
Given frames $\{\tau_n\}_n$ and $\{\omega_n\}_n$, it is rather difficult to check whether they are similar because one has to get an invertible operator and verify Equation (\ref{SIMILARITYEQUATION}) for every natural number. Thus it is better if there is a characterization which does not  involve natural numbers and involves only operators. Further, it is natural to ask whether there is a formula for the operator $T$ which gives similarity. This was done by Balan in 1999 and independently by Han, and Larson in 2000 which states as follows. 
\begin{theorem}\cite{BALAN, HANMEMOIRS}\label{BALANCHARSIM}
For two frames $\{\tau_n\}_n$ and $\{\omega_n\}_n$ for $  \mathcal{H}$, the following are equivalent.	
	\begin{enumerate}[\upshape (i)]
		\item $\{\tau_n\}_n$ and $\{\omega_n\}_n$ are similar, i.e., there exists bounded invertible operator $T:\mathcal{H} \to \mathcal{H}$ such that $\omega_n=T \tau_n$, $\forall n \in \mathbb{N}$.
		\item $\theta_\omega=\theta_\tau T$,  for some bounded invertible operator $T:\mathcal{H} \to \mathcal{H}$.
		\item $ P_\omega=P_\tau$.
	\end{enumerate}
If one of the above conditions is satisfied, then the invertible operator in (i) and (ii) is unique and is given by $T=S_\tau^{-1}\theta_\tau^*\theta_\omega$.
\end{theorem}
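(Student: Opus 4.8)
The plan is to prove the three statements equivalent by the cyclic chain (i) $\Rightarrow$ (ii) $\Rightarrow$ (iii) $\Rightarrow$ (i), and then to read off the uniqueness of the intertwining operator and its closed form from the arguments used along the way. Everything rests on Theorem \ref{FFF}: the factorisations $S_\tau=\theta_\tau^*\theta_\tau$ and $S_\omega=\theta_\omega^*\theta_\omega$, the injectivity of the analysis operators, the elementary identities $\theta_\tau^*e_n=\tau_n$ and $\theta_\omega^*e_n=\omega_n$ for the standard unit vectors $e_n$, and, crucially, the fact that $P_\tau=\theta_\tau S_\tau^{-1}\theta_\tau^*$ is exactly the orthogonal projection of $\ell^2(\mathbb N)$ onto the closed range $\theta_\tau(\mathcal H)$ (and similarly for $\omega$).

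For (i) $\Rightarrow$ (ii): if $\omega_n=T\tau_n$ with $T$ bounded and invertible, then for $h\in\mathcal H$ we have $\langle h,\omega_n\rangle=\langle h,T\tau_n\rangle=\langle T^*h,\tau_n\rangle$ for every $n$, i.e. $\theta_\omega h=\theta_\tau(T^*h)$; thus $\theta_\omega=\theta_\tau T^*$ and $T^*$ is bounded and invertible, giving (ii). I will carry this adjoint along throughout, since it is what forces the precise shape of the formula at the end. For (ii) $\Rightarrow$ (iii): from $\theta_\omega=\theta_\tau T$ with $T$ invertible one gets $\theta_\omega(\mathcal H)=\theta_\tau T(\mathcal H)=\theta_\tau(\mathcal H)$; since by Theorem \ref{FFF}(vi) the operators $P_\omega$ and $P_\tau$ are the orthogonal projections onto these two (now equal) closed subspaces, and such a projection is uniquely determined by its range, $P_\omega=P_\tau$. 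Alternatively, one may compute $S_\omega=T^*S_\tau T$ directly and substitute into $P_\omega=\theta_\omega S_\omega^{-1}\theta_\omega^*$.

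The \textbf{main obstacle is (iii) $\Rightarrow$ (i)}, where an operator must be manufactured out of the single equality $P_\omega=P_\tau$. From that equality and Theorem \ref{FFF}(vi), $\theta_\omega(\mathcal H)=\operatorname{ran}P_\omega=\operatorname{ran}P_\tau=\theta_\tau(\mathcal H)=:R$, a closed subspace of $\ell^2(\mathbb N)$. By Theorem \ref{FFF}(iii) the corestrictions $\theta_\tau,\theta_\omega\colon\mathcal H\to R$ are bounded linear bijections, hence topological isomorphisms by the bounded inverse theorem; put $T_0:=\theta_\tau^{-1}\theta_\omega\colon\mathcal H\to\mathcal H$, which is bounded and invertible and satisfies $\theta_\tau T_0=\theta_\omega$. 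Taking adjoints gives $T_0^*\theta_\tau^*=\theta_\omega^*$, and evaluating at $e_n$ with $\theta_\tau^*e_n=\tau_n$, $\theta_\omega^*e_n=\omega_n$ yields $\omega_n=T_0^*\tau_n$ for all $n$, which is (i).

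It remains to address uniqueness and the formula. If $\theta_\omega=\theta_\tau T_1=\theta_\tau T_2$ with $T_1,T_2$ invertible, injectivity of $\theta_\tau$ forces $T_1=T_2$; pre-composing $\theta_\omega=\theta_\tau T$ with $\theta_\tau^*$ and using $S_\tau=\theta_\tau^*\theta_\tau$ gives $T=S_\tau^{-1}\theta_\tau^*\theta_\omega$, and correspondingly the operator realising (i) is obtained by taking the adjoint, namely $\theta_\omega^*\theta_\tau S_\tau^{-1}$. The only steps that demand genuine care are the appeal to the bounded inverse theorem to invert $\theta_\tau$ on its range, and keeping the passage between the vectorwise identity $\omega_n=T\tau_n$ and the operator identities adjoint-consistent.
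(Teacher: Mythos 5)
Your proof is correct, but it follows a genuinely different route from the one underlying the paper: Theorem \ref{BALANCHARSIM} is quoted there as a known result, and the paper's own argument is, in effect, the proof of its Banach-space generalization, Theorem \ref{SEQUENTIALSIMILARITY}, which is purely algebraic. In Hilbert-space notation that proof gets (iii) $\Rightarrow$ (ii) by writing $\theta_\omega=P_\omega\theta_\omega=P_\tau\theta_\omega=\theta_\tau\left(S_\tau^{-1}\theta_\tau^*\theta_\omega\right)$ and then verifies invertibility of the candidate operator by exhibiting the explicit two-sided inverse $S_\omega^{-1}\theta_\omega^*\theta_\tau$ through the projection identities; it never takes the adjoint of an unknown operator, never uses orthogonality of $P_\tau$, and never appeals to an open-mapping argument, which is exactly why it transfers to $\ell^p$ and Banach spaces where those tools are unavailable. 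You instead manufacture the operator geometrically: from $P_\omega=P_\tau$ you deduce equality of the closed analysis ranges, invert the corestriction $\theta_\tau:\mathcal{H}\to R$ by the bounded inverse theorem, set $T_0=\theta_\tau^{-1}\theta_\omega$, and pass to adjoints together with $\theta_\tau^*e_n=\tau_n$, $\theta_\omega^*e_n=\omega_n$; uniqueness and the closed form then come from hitting $\theta_\omega=\theta_\tau T$ with $\theta_\tau^*$. Your approach buys a transparent picture (similarity is equality of analysis ranges, and an orthogonal projection is determined by its range, Theorem \ref{FFF}(vi)) at the price of Hilbert-specific machinery; the paper's approach buys explicit formulas and portability. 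One point where you are actually sharper than the quoted statement: the operator realizing (i) is $\theta_\omega^*\theta_\tau S_\tau^{-1}$, the adjoint of the operator $S_\tau^{-1}\theta_\tau^*\theta_\omega$ realizing (ii), and your adjoint-consistent bookkeeping matches the two-operator formulation $T_{f,g}=S_{f,\tau}^{-1}\theta_\tau\theta_g$, $T_{\tau,\omega}=\theta_\omega\theta_f S_{f,\tau}^{-1}$ of Theorem \ref{SEQUENTIALSIMILARITY}, whereas the single formula in the statement conflates the two.
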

In this paper, we try to obtain characterizations for a class of ASFs (see Section \ref{SECTIONTWO} for definitions), thus initiating the study of characterizations for arbitrary ASFs. Hilbert space frame theory is more fertile due to the fact that we can continuously switch between $\mathcal{H}$ and the standard separable Hilbert space $\ell^2(\mathbb{N})$ (see Theorem \ref{FFF}). ASFs do not have this property. For this reason, we first define p-ASF in Definition \ref{PASFDEF} which is inspired from Theorem \ref{FFF}. As an immediate result, we derive Theorem \ref{OURS} which generalizes Theorem \ref{FFF}. We then obtain Theorems  \ref{THAFSCHAR} and \ref{SCHFRS}, they generalize Theorems \ref{HOLUBTHEOREM} and \ref{OLECHA}, respectively. Next we characterize all the dual frames in Theorem  \ref{ALLDUAL} whose particular case is Theorem \ref{LITHM}. Since there are two sequences in ASFs, we define similarity using two operators (Definition \ref{SIMILARITYMINE}). We then characterize similar frames using operators, which extends Theorem \ref{BALANCHARSIM} and  it is free from natural numbers (Theorem \ref{SEQUENTIALSIMILARITY}).   Later we introduce orthogonal ASFs (Definition \ref{ORTHOGONALDEF}) and derive an interpolation result (Theorem \ref{INTERPOLATION}). We end the paper by relating notions duality, similarity and orthogonality (Propositions \ref{LASTONE} and \ref{LASTTWO}).

 \section{Characterizations for subclasses of ASF, its dual frames and similarity of ASFs}\label{SECTIONTWO}
 Let $\mathcal{X}$ be a separable Banach space and $\mathcal{X}^*$ be its dual. In the rest of this paper, $\{e_n\}_n$ denotes the standard Schauder basis for  $\ell^p(\mathbb{N})$, $p \in [1, \infty)$ and $\{h_n\}_n$ denotes the coordinate functionals associated with $\{e_n\}_n$. Equation (\ref{REP}) motivated Casazza, Dilworth, Odell, Schlumprecht, and Zsak,  to define the notion of Schauder frame for $\mathcal{X}$ in 2008 \cite{CASAZZA}.
  \begin{definition}\cite{CASAZZA}\label{FRAMING}
  Let $\{\tau_n\}_n$ be a sequence in  $\mathcal{X}$ and 	$\{f_n\}_n$ be a sequence in  $\mathcal{X}^*.$ The pair $ (\{f_n \}_{n}, \{\tau_n \}_{n}) $ is said to be a Schauder frame for $\mathcal{X}$ if 
  \begin{align}\label{SFEQUA}
  x=\sum_{n=1}^\infty
  f_n(x)\tau_n, \quad \forall x \in
  \mathcal{X}.
  \end{align}
  \end{definition} 
To a greater extent, Definition \ref{FRAMING} was generalized by Freeman, Odell,  Schlumprecht, and Zsak in 2014 \cite{FREEMANODELL} (actually it was first generalized  for $\mathbb{R}^n$ by Thomas in 2012 \cite{THOMAS}).
   \begin{definition}\cite{FREEMANODELL, THOMAS}\label{ASFDEF}
  	Let $\{\tau_n\}_n$ be a sequence in  $\mathcal{X}$ and 	$\{f_n\}_n$ be a sequence in  $\mathcal{X}^*.$ The pair $ (\{f_n \}_{n}, \{\tau_n \}_{n}) $ is said to be an approximate Schauder frame (ASF) for $\mathcal{X}$ if 
  	\begin{align}\label{ASFEQUA}
 \text {(frame operator)}\quad	S_{f, \tau}:\mathcal{X}\ni x \mapsto S_{f, \tau}x\coloneqq \sum_{n=1}^\infty
  	f_n(x)\tau_n \in
  	\mathcal{X}
  	\end{align}
  	is a well-defined bounded linear, invertible operator.
  \end{definition} 
  Note that whenever $S_{f, \tau}=I_\mathcal{X}$, the identity operator on $\mathcal{X}$, Definition \ref{ASFDEF} reduces to Definition \ref{FRAMING}. Since $S_{f, \tau}$ is invertible, it follows that there are $a,b>0$ such that 
  \begin{align*}
  a\|x\|\leq \left\|\sum_{n=1}^\infty
  f_n(x)\tau_n \right\|\leq b\|x\|, \quad \forall x \in  \mathcal{X}.
  \end{align*} 
  We call $a$ as lower ASF bound and $b$ as upper ASF bound. Supremum (resp. infimum) of the set of all lower (resp. upper) ASF bounds is called optimal lower (resp. upper) ASF bound. From the theory of bounded linear operators between Banach spaces, one sees that optimal lower frame bound is $ \|S_{f, \tau}^{-1}\|^{-1}$ and optimal upper frame bound is $  \|S_{f,\tau}\|$. Advantage of ASF over Schauder frame is that it is more easier to get the operator in (\ref{ASFEQUA}) as invertible than obtaining Equation (\ref{SFEQUA}).
  \begin{definition}\label{PASFDEF}
  An ASF $ (\{f_n \}_{n}, \{\tau_n \}_{n}) $  for $\mathcal{X}$	is said to be p-ASF, $p \in [1, \infty)$ if both the maps 
  \begin{align}\label{ALIGNP}
  &\text{(analysis operator)} \quad \theta_f: \mathcal{X}\ni x \mapsto \theta_f x\coloneqq \{f_n(x)\}_n \in \ell^p(\mathbb{N}) \text{ and } \\
  &\text{(synthesis operator)} \quad\theta_\tau : \ell^p(\mathbb{N}) \ni \{a_n\}_n \mapsto \theta_\tau \{a_n\}_n\coloneqq \sum_{n=1}^\infty a_n\tau_n \in \mathcal{X}\label{AAA}
  \end{align}
  are well-defined bounded linear operators. A Schauder frame which is a p-ASF is called as a Parseval p-ASF.
  \end{definition}
  Observe that, in terms of inequalities, (\ref{ALIGNP}) and (\ref{AAA}) say that there exist $c,d>0$, such that 
  \begin{align*}
  &\left(\sum_{n=1}^\infty
  |f_n(x)|^p\right)^\frac{1}{p}\leq c \|x\|, \quad \forall x \in \mathcal{X} \text{ and } \\
  &\left\|\sum_{n=1}^\infty a_n\tau_n\right\|\leq d \left(\sum_{n=1}^\infty
  |a_n|^p\right)^\frac{1}{p}, \quad \forall \{a_n\}_n  \in \ell^p(\mathbb{N}).
  \end{align*}
  Now we have Banach space analogous of Theorem \ref{FFF}.
  \begin{theorem}\label{OURS}
  	Let $ (\{f_n \}_{n}, \{\tau_n \}_{n}) $ be a p-ASF for $\mathcal{X}$.  Then
  	\begin{enumerate}[\upshape(i)]
  		\item We have 
  	\begin{align}\label{REPBANACH}
  		x=\sum_{n=1}^\infty (f_nS_{f, \tau}^{-1})(x) \tau_n=\sum_{n=1}^\infty
  		f_n(x) S_{f, \tau}^{-1}\tau_n, \quad \forall x \in
  		\mathcal{X}.
  		\end{align}
  		\item $ (\{f_nS_{f, \tau}^{-1} \}_{n}, \{S_{f, \tau}^{-1} \tau_n \}_{n}) $ is a p-ASF for $\mathcal{X}$.
  		\item The analysis operator 
  		$
  		\theta_f: \mathcal{X} \ni x \mapsto \{f_n(x) \}_n \in \ell^p(\mathbb{N})
  		$
  		 is injective. 
  		 \item 
  		The synthesis operator 
  		$
  		\theta_\tau: \ell^p(\mathbb{N}) \ni \{a_n \}_n \mapsto \sum_{n=1}^\infty a_n\tau_n \in \mathcal{X}
  		$
  		is surjective.
  		\item Frame operator 
  		splits as $S_{f, \tau}=\theta_\tau\theta_f.$
  		\item  $P_{f, \tau}\coloneqq\theta_fS_{f,\tau}^{-1}\theta_\tau:\ell^p(\mathbb{N})\to \ell^p(\mathbb{N})$ is a projection onto   $\theta_f(\mathcal{X})$.
  	\end{enumerate}
  	\end{theorem}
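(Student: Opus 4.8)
The plan is to prove part (v) first, since it is immediate from the definitions, and then harvest (iii), (iv), (i), (ii), (vi) as consequences of it together with the invertibility of $S_{f,\tau}$. For (v): given $x\in\mathcal{X}$, the p-ASF hypothesis guarantees that $\theta_f x=\{f_n(x)\}_n$ lies in $\ell^p(\mathbb{N})$, and applying $\theta_\tau$ to this sequence returns $\sum_{n=1}^\infty f_n(x)\tau_n=S_{f,\tau}x$; hence $S_{f,\tau}=\theta_\tau\theta_f$ as bounded linear operators on $\mathcal{X}$. From this factorization, (iii) and (iv) drop out at once: if $\theta_f x=0$ then $S_{f,\tau}x=\theta_\tau\theta_f x=0$, forcing $x=0$, so $\theta_f$ is injective; and $\theta_\tau(\theta_f S_{f,\tau}^{-1})=S_{f,\tau}S_{f,\tau}^{-1}=I_{\mathcal{X}}$, so $\theta_\tau$ has a bounded right inverse and is therefore surjective.

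For (i), the first equality is purely formal: write $x=S_{f,\tau}(S_{f,\tau}^{-1}x)$ and expand $S_{f,\tau}$ by its definition to get $x=\sum_{n=1}^\infty f_n(S_{f,\tau}^{-1}x)\tau_n=\sum_{n=1}^\infty (f_nS_{f,\tau}^{-1})(x)\tau_n$. The second equality needs one interchange: apply the bounded, hence continuous, operator $S_{f,\tau}^{-1}$ to the convergent series $S_{f,\tau}x=\sum_{n=1}^\infty f_n(x)\tau_n$ and pass it through the partial sums, yielding $x=\sum_{n=1}^\infty f_n(x)S_{f,\tau}^{-1}\tau_n$.

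The same ``commute $S_{f,\tau}^{-1}$ with the defining series'' device, applied repeatedly, gives (ii). Writing $g_n:=f_nS_{f,\tau}^{-1}$ and $\sigma_n:=S_{f,\tau}^{-1}\tau_n$, one checks that $\theta_g=\theta_f S_{f,\tau}^{-1}$ and $\theta_\sigma=S_{f,\tau}^{-1}\theta_\tau$, both bounded linear as compositions of bounded operators, with $\theta_g$ mapping into $\ell^p(\mathbb{N})$ and $\theta_\sigma$ defined on $\ell^p(\mathbb{N})$; and, invoking part (i), $S_{g,\sigma}x=\sum_{n=1}^\infty g_n(x)\sigma_n=S_{f,\tau}^{-1}\bigl(\sum_{n=1}^\infty (f_nS_{f,\tau}^{-1})(x)\tau_n\bigr)=S_{f,\tau}^{-1}x$, which is well-defined, bounded linear and invertible. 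Thus $(\{g_n\}_n,\{\sigma_n\}_n)$ satisfies all the requirements of Definition \ref{PASFDEF}. Finally, for (vi), idempotency is a one-line computation using (v): $P_{f,\tau}^2=\theta_f S_{f,\tau}^{-1}(\theta_\tau\theta_f)S_{f,\tau}^{-1}\theta_\tau=\theta_f S_{f,\tau}^{-1}\theta_\tau=P_{f,\tau}$. Since $P_{f,\tau}$ factors through $\theta_f$ on the left, its range lies in $\theta_f(\mathcal{X})$; conversely every $\theta_f x$ satisfies $P_{f,\tau}(\theta_f x)=\theta_f S_{f,\tau}^{-1}(\theta_\tau\theta_f x)=\theta_f S_{f,\tau}^{-1}S_{f,\tau}x=\theta_f x$, so $\theta_f(\mathcal{X})$ is contained in (indeed fixed pointwise by) $P_{f,\tau}$, giving $\operatorname{range}(P_{f,\tau})=\theta_f(\mathcal{X})$.

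There is no genuine obstacle in this argument; it is essentially the Hilbert-space proof of Theorem \ref{FFF} transcribed, with adjoints replaced by the a priori bounded maps $\theta_f,\theta_\tau$ supplied by the p-ASF assumption. The only points that need care are the legitimacy of commuting the bounded operator $S_{f,\tau}^{-1}$ with the defining infinite series (a routine continuity argument on partial sums) and, in (ii), checking that the candidate dual pair meets \emph{all three} conditions of Definition \ref{PASFDEF} rather than merely the ASF condition.
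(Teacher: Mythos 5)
Your proposal is correct and follows essentially the same route as the paper, which simply asserts that (i) follows from continuity and linearity of $S_{f,\tau}^{-1}$, (ii)--(iv) from invertibility of $S_{f,\tau}$, and that (v), (vi) are routine; you have merely spelled out these routine verifications, with the sensible reordering of proving the factorization $S_{f,\tau}=\theta_\tau\theta_f$ first and deriving the rest from it. No gaps.
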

  \begin{proof}
  	First follows from the continuity and linearity of $S_{f, \tau}^{-1}$.  Because $S_{f, \tau}$ is invertible, we have (ii). Again invertibility of  $S_{f, \tau}$ makes $\theta_f$  injective and $\theta_\tau$  surjective.  (v) and (vi) are routine calculations.
  \end{proof}
   Now we can derive a generalization of Theorem \ref{HOLUBTHEOREM} for Banach spaces.
  \begin{theorem}\label{THAFSCHAR}
  	A pair  $ (\{f_n\}_{n}, \{\tau_n\}_{n}) $ is a p-ASF for 	$\mathcal{X}$
  	if and only if 
  	\begin{align*}
  	f_n=h_n U, \quad \tau_n=Ve_n, \quad \forall n \in \mathbb{N},
  	\end{align*}  where $U:\mathcal{X} \rightarrow\ell^p(\mathbb{N})$, $ V: \ell^p(\mathbb{N})\to \mathcal{X}$ are bounded linear operators such that $VU$ is bounded invertible.
  \end{theorem}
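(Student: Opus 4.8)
The plan is to identify the abstract operators $U$ and $V$ with the analysis operator $\theta_f$ and synthesis operator $\theta_\tau$, and then to recognise the frame operator as the composition $VU$. The whole statement is essentially a bookkeeping exercise once this dictionary is in place.

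For the forward implication, assume $(\{f_n\}_n, \{\tau_n\}_n)$ is a p-ASF for $\mathcal{X}$. By Definition \ref{PASFDEF} the analysis operator $\theta_f:\mathcal{X}\to\ell^p(\mathbb{N})$ and the synthesis operator $\theta_\tau:\ell^p(\mathbb{N})\to\mathcal{X}$ are bounded and linear; I would set $U\coloneqq\theta_f$ and $V\coloneqq\theta_\tau$. Since $h_n$ is the $n$-th coordinate functional on $\ell^p(\mathbb{N})$, one has $(h_nU)(x)=h_n(\{f_k(x)\}_k)=f_n(x)$ for all $x$, so $f_n=h_nU$; and $Ve_n=\theta_\tau e_n=\tau_n$. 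Finally, by Theorem \ref{OURS}(v), $VU=\theta_\tau\theta_f=S_{f,\tau}$, which is bounded invertible because $(\{f_n\}_n,\{\tau_n\}_n)$ is an ASF.

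For the converse, suppose $f_n=h_nU$ and $\tau_n=Ve_n$ with $U\colon\mathcal{X}\to\ell^p(\mathbb{N})$, $V\colon\ell^p(\mathbb{N})\to\mathcal{X}$ bounded linear and $VU$ bounded invertible; I must verify the three defining conditions of a p-ASF. First, $\theta_fx=\{f_n(x)\}_n=\{h_n(Ux)\}_n=Ux$, so $\theta_f=U$ is bounded. Next, for $\{a_n\}_n\in\ell^p(\mathbb{N})$, since $\{e_n\}_n$ is a Schauder basis of $\ell^p(\mathbb{N})$ the partial sums $\sum_{n=1}^N a_ne_n$ converge to $\{a_n\}_n$, and continuity of $V$ gives $\sum_{n=1}^N a_n\tau_n=V\!\left(\sum_{n=1}^N a_ne_n\right)\to V\{a_n\}_n$; hence $\theta_\tau$ is well defined with $\theta_\tau=V$ bounded. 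Applying the same convergence argument to the particular sequence $\{f_n(x)\}_n=Ux\in\ell^p(\mathbb{N})$ shows that the frame-operator series $\sum_{n=1}^\infty f_n(x)\tau_n$ converges and equals $V(Ux)$, so $S_{f,\tau}=VU$ is bounded invertible by hypothesis. Thus $(\{f_n\}_n,\{\tau_n\}_n)$ is a p-ASF.

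I do not expect a genuine obstacle here; the only point deserving care is that both the synthesis series and the frame-operator series converge because $\{e_n\}_n$ is a Schauder basis of $\ell^p(\mathbb{N})$ — this is exactly why $p\in[1,\infty)$ is imposed in Definition \ref{PASFDEF} — and that one correctly attributes each of the three requirements for a p-ASF (boundedness of $\theta_f$, boundedness of $\theta_\tau$, invertibility of $S_{f,\tau}$) to the appropriate hypothesis on $U$, $V$, and $VU$.
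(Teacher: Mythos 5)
Your proof is correct and follows essentially the same route as the paper: identify $U=\theta_f$, $V=\theta_\tau$ in the forward direction, and in the converse verify $\theta_f=U$, $\theta_\tau=V$ and $S_{f,\tau}=VU$. Your extra care in justifying convergence of the synthesis and frame-operator series via the Schauder basis $\{e_n\}_n$ and continuity of $V$ simply spells out what the paper's computation in (\ref{ORIGINALEQA}) leaves implicit.
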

  \begin{proof}
  	$(\Leftarrow)$ Clearly $\theta_f$ and $\theta_\tau$ are bounded linear operators. Now let $x\in \mathcal{X}$. Then 
  	\begin{align}\label{ORIGINALEQA}
  	S_{f, \tau}x= \sum_{n=1}^\infty
  	f_n(x)\tau_n=\sum_{n=1}^\infty h_n(Ux)Ve_n=V\left(\sum_{n=1}^\infty h_n(Ux)e_n\right)=VUx.
  	\end{align} 
  	Hence $S_{f, \tau}$ is bounded invertible. \\
  	$(\Rightarrow)$ Define $U\coloneqq \theta_f$, $V\coloneqq \theta_\tau$. Then $h_nUx=h_n\theta_fx=h_n(\{f_k(x)\}_k)=f_n(x)$, $\forall x \in \mathcal{X}$, $Ve_n=\theta_\tau e_n=\tau_n$, $\forall n \in \mathbb{N}$ and $VU=\theta_\tau \theta_f=S_{f, \tau}$ which is bounded invertible.
  \end{proof}
Note that Theorem \ref{THAFSCHAR}  generalizes Theorem \ref{HOLUBTHEOREM}. In fact, in the case of Hilbert spaces, Theorem \ref{THAFSCHAR} reads  as ``A sequence $\{\tau_n\}_n$ in  $\mathcal{H}$ is a
frame for $\mathcal{H}$	if and only if there exists a  bounded linear operator $T:\ell^2(\mathbb{N}) \to \mathcal{H}$ such that $Te_n=\tau_n$, for all $n \in \mathbb{N}$ and $TT^*$ is invertible". Now we know that $TT^*$ is invertible if and only if $T$ is surjective.\\
Since every separable Hilbert space admits an orthonormal basis, the existence of orthonormal basis in  Theorem \ref{OLECHA} is automatic. On the other hand, Enflo showed that there are  separable Banach spaces which do not  have Schauder basis (see \cite{JAMES}). Thus to obtain analogous of Theorem \ref{OLECHA} for  Banach spaces, we need to impose condition on $\mathcal{X}$.
\begin{theorem}\label{SCHFRS}
	Assume that $\mathcal{X}$ admits a Schauder basis $\{\omega_n\}_n$. Let $\{g_n\}_n$ denote the coordinate functionals associated with $\{\omega_n\}_n$. Assume that 
	\begin{align}\label{ASSUMPTIONNEEDED}
	 \{g_n(x)\}_n \in \ell^p(\mathbb{N}), \quad \forall x \in \mathcal{X}.
	\end{align} 
		Then a  pair  $ (\{f_n\}_{n}, \{\tau_n\}_{n}) $ is a p-ASF for 	$\mathcal{X}$
	if and only if 
	\begin{align*}
	f_n=g_n U, \quad \tau_n=V\omega_n, \quad \forall n \in \mathbb{N},
	\end{align*}  where $U, V:\mathcal{X} \rightarrow  \mathcal{X}$ are bounded linear operators such that $VU$ is bounded invertible.
\end{theorem}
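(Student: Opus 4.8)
The plan is to mimic the proof of Theorem \ref{THAFSCHAR}, using the Schauder basis $\{\omega_n\}_n$ (together with its coordinate functionals $\{g_n\}_n$) in the role that the standard basis $\{e_n\}_n$ and coordinate functionals $\{h_n\}_n$ of $\ell^p(\mathbb{N})$ played there. The assumption \eqref{ASSUMPTIONNEEDED} is exactly what is needed to manufacture a bounded map $\Phi\colon \mathcal{X}\to\ell^p(\mathbb{N})$, $\Phi x \coloneqq \{g_n(x)\}_n$; one should first record that $\Phi$ is well-defined and bounded. Well-definedness is immediate from \eqref{ASSUMPTIONNEEDED}; boundedness follows from the closed graph theorem, since if $x_k\to x$ in $\mathcal{X}$ and $\Phi x_k \to \{a_n\}_n$ in $\ell^p(\mathbb{N})$, then coordinatewise convergence forces $a_n = g_n(x)$ by continuity of each $g_n$. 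Note also that $\Phi$ has a bounded left inverse: the synthesis-type operator $\Psi\colon\ell^p(\mathbb{N})\to\mathcal{X}$, $\Psi\{a_n\}_n \coloneqq \sum_n a_n\omega_n$, is bounded (again via closed graph, using that $\{\omega_n\}_n$ is a Schauder basis so partial sums are uniformly bounded) and satisfies $\Psi\Phi = I_\mathcal{X}$ because $x = \sum_n g_n(x)\omega_n$.

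For the forward direction $(\Rightarrow)$, suppose $(\{f_n\}_n,\{\tau_n\}_n)$ is a p-ASF. Define $U \coloneqq \Psi\,\theta_f\colon\mathcal{X}\to\mathcal{X}$ and $V \coloneqq \theta_\tau\,\Phi\colon\mathcal{X}\to\mathcal{X}$, where $\theta_f,\theta_\tau$ are the analysis and synthesis operators of the p-ASF, which are bounded by hypothesis. Then $U,V$ are bounded linear operators on $\mathcal{X}$. We compute $g_nU = g_n\Psi\theta_f$; since $g_n\Psi\{a_k\}_k = g_n\!\left(\sum_k a_k\omega_k\right) = a_n$ (by continuity of $g_n$ and the biorthogonality $g_n(\omega_k)=\delta_{nk}$), we get $g_nU x = (\theta_f x)_n = f_n(x)$, i.e. $f_n = g_nU$. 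Similarly $V\omega_n = \theta_\tau\Phi\omega_n = \theta_\tau\{g_k(\omega_n)\}_k = \theta_\tau e_n = \tau_n$. Finally $VU = \theta_\tau\Phi\Psi\theta_f$; the crux here is that $\Phi\Psi$ is the identity on $\ell^p(\mathbb{N})$ — indeed $\Phi\Psi\{a_k\}_k = \{g_n(\sum_k a_k\omega_k)\}_n = \{a_n\}_n$ — so $VU = \theta_\tau\theta_f = S_{f,\tau}$, which is bounded invertible by Theorem \ref{OURS}(v).

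For the converse $(\Leftarrow)$, suppose $f_n = g_nU$ and $\tau_n = V\omega_n$ with $U,V$ bounded and $VU$ bounded invertible. First check the analysis and synthesis operators are bounded: $\theta_f x = \{g_n(Ux)\}_n = \Phi(Ux)$, which is bounded since $\Phi$ and $U$ are; and $\theta_\tau\{a_n\}_n = \sum_n a_n V\omega_n = V(\sum_n a_n\omega_n) = V\Psi\{a_n\}_n$, which is bounded since $V$ and $\Psi$ are. Then, paralleling \eqref{ORIGINALEQA}, for $x\in\mathcal{X}$,
\begin{align*}
S_{f,\tau}x = \sum_{n=1}^\infty f_n(x)\tau_n = \sum_{n=1}^\infty g_n(Ux)\,V\omega_n = V\!\left(\sum_{n=1}^\infty g_n(Ux)\,\omega_n\right) = V(Ux) = VUx,
\end{align*}
using continuity of $V$ and the basis expansion of $Ux$. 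Hence $S_{f,\tau} = VU$ is bounded invertible, so $(\{f_n\}_n,\{\tau_n\}_n)$ is a p-ASF.

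The main obstacle is the bookkeeping around the auxiliary operators $\Phi$ and $\Psi$: one must be careful that \eqref{ASSUMPTIONNEEDED} plus the Schauder basis property genuinely deliver the boundedness of $\Phi$ and $\Psi$ and the identities $\Psi\Phi = I_\mathcal{X}$, $\Phi\Psi = I_{\ell^p(\mathbb{N})}$, as these are what let the argument of Theorem \ref{THAFSCHAR} go through verbatim with $\mathcal{X}$ in place of $\ell^p(\mathbb{N})$. Once these are in hand the rest is the same routine computation as before.
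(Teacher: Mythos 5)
Your construction is essentially the paper's own proof: your $\Phi$ and $\Psi$ are exactly the operators $T$ and $T^{-1}$ used there, your choices $U=\Psi\theta_f$, $V=\theta_\tau\Phi$ coincide with the paper's $U=T^{-1}\theta_f$, $V=\theta_\tau T$, and both the biorthogonality computations and the converse computation paralleling \eqref{ORIGINALEQA} are the same. The only point where you diverge is that you try to \emph{justify} what the paper simply asserts, namely that the synthesis-type map $\Psi\colon\ell^p(\mathbb{N})\to\mathcal{X}$, $\{a_n\}_n\mapsto\sum_n a_n\omega_n$, is well defined and bounded (equivalently, that $T=\Phi$ is onto $\ell^p(\mathbb{N})$), and your justification does not close that point: the closed graph theorem only yields boundedness \emph{after} one knows that $\sum_n a_n\omega_n$ converges for every $\{a_n\}_n\in\ell^p(\mathbb{N})$, and that convergence is not a consequence of \eqref{ASSUMPTIONNEEDED} as literally stated, nor of the uniform boundedness of the basis projections. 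For instance, $\mathcal{X}=\ell^1(\mathbb{N})$ with its standard basis satisfies \eqref{ASSUMPTIONNEEDED} for $p=2$, yet $\sum_n n^{-1}\omega_n$ diverges, so $\Psi$ is not defined on all of $\ell^2(\mathbb{N})$ and $\Phi\Psi=I_{\ell^p(\mathbb{N})}$ fails. This is precisely the step your converse direction leans on (boundedness of $\theta_\tau=V\Psi$) and your forward direction uses through $\Phi\Psi=I_{\ell^p(\mathbb{N})}$. The paper handles it by reading \eqref{ASSUMPTIONNEEDED} as saying that $T$ is bounded and invertible with inverse the synthesis map; if you grant that reading of the hypothesis, your argument is the paper's argument verbatim, but as written your closed-graph remark should be replaced either by that stronger hypothesis or by an actual proof that the coefficient map is onto $\ell^p(\mathbb{N})$.
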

\begin{proof}
	$(\Leftarrow)$ This is similar to the calculation done in (\ref{ORIGINALEQA}).\\
	$(\Rightarrow)$ Let $T$ be the map defined by 
	\begin{align*}
	T:\mathcal{X}\ni \sum_{n=1}^\infty a_n\omega_n\mapsto \sum_{n=1}^\infty a_ne_n\in \ell^p(\mathbb{N}).
	\end{align*}
	 Assumption (\ref{ASSUMPTIONNEEDED}) then says  that $T$ is bounded invertible operator with inverse $ T^{-1} :\ell^p(\mathbb{N}) \ni \sum_{n=1}^\infty b_ne_n \mapsto \sum_{n=1}^\infty b_n\omega_n \in \mathcal{X}$. Define $ U\coloneqq T^{-1}\theta_f$ and $V\coloneqq\theta_\tau T$. Then $ U,V$ are bounded  such that  $ VU=(\theta_\tau T)(T^{-1}\theta_f)=\theta_\tau\theta_f=S_{f,\tau}$ is invertible  and  for $ x \in \mathcal{X}$ we have 
	 \begin{align*}
	(g_nU)(x)&= g_n(T^{-1}\theta_fx)=g_n(T^{-1}(\{f_k(x)\}_{k})) 
	= g_n\left(\sum_{k=1}^\infty f_k(x)T^{-1}e_k\right)
	\\&=g_n\left(\sum_{k=1}^\infty f_k(x)\omega_k\right)
	=\sum_{k=1}^\infty f_k(x)g_n(\omega_k)=f_n(x), \quad \forall x \in \mathcal{X}
	 \end{align*} 
	 and $ V\omega_n=\theta_\tau T\omega_n=\theta_\tau e_n=\tau_n, \forall  n \in \mathbb{N}$.
\end{proof}
Equation (\ref{DUAL}) motivates us to define the notion of dual frame as follows.
\begin{definition}\label{SIMILARITYMINE}
Let $ (\{f_n\}_{n}, \{\tau_n\}_{n}) $ be a p-ASF for 	$\mathcal{X}$. 	A p-ASF $ (\{g_n \}_{n}, \{\omega_n \}_{n}) $ for $\mathcal{X}$ is a dual  for $ (\{f_n \}_{n}, \{\tau_n \}_{n}) $ if 
\begin{align*}
x=\sum_{n=1}^\infty g_n(x) \tau_n=\sum_{n=1}^\infty
f_n(x) \omega_n, \quad \forall x \in
\mathcal{X}.
\end{align*}
\end{definition}
Note that dual frames always exist. In fact, the  Equation (\ref{REPBANACH}) shows that the frame $ (\{f_nS_{f, \tau}^{-1} \}_{n}, \{S_{f, \tau}^{-1} \tau_n \}_{n}) $ is a  dual for $ (\{f_n\}_{n}, \{\tau_n\}_{n}) $. In the case of Hilbert spaces, the frame $\{S_\tau^{-1}\tau_n\}_n$ (Theorem \ref{FFF}) is called as canonical dual for $\{\tau_n\}_n$. Following this term, we call the  frame $ (\{f_nS_{f, \tau}^{-1} \}_{n}, \{S_{f, \tau}^{-1} \tau_n \}_{n}) $ as canonical dual for  $ (\{f_n\}_{n}, \{\tau_n\}_{n}) $. With this notion, the following theorem  follows easily.
\begin{theorem}
Let $ (\{f_n\}_{n}, \{\tau_n\}_{n}) $ be a  p-ASF for $ \mathcal{X}$ with frame bounds $ a$ and $ b.$ Then
\begin{enumerate}[\upshape(i)]
	\item The canonical dual p-ASF for the canonical dual p-ASF  for  $ (\{f_n\}_{n}, \{\tau_n\}_{n}) $ is itself.
	\item$ \frac{1}{b}, \frac{1}{a}$ are frame bounds for the canonical dual for $ (\{f_n\}_{n}, \{\tau_n\}_{n}) $.
	\item If $ a, b $ are optimal frame bounds for $ (\{f_n\}_{n}, \{\tau_n\}_{n}) $, then $ \frac{1}{b}, \frac{1}{a}$ are optimal  frame bounds for its canonical dual.
\end{enumerate} 
\end{theorem}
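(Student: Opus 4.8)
The plan is to compute the frame operator of the canonical dual once and then read off all three statements from it. Abbreviate $S \coloneqq S_{f,\tau}$, and set $g_n \coloneqq f_n S^{-1}$ and $\omega_n \coloneqq S^{-1}\tau_n$, so that the canonical dual is the pair $(\{g_n\}_n, \{\omega_n\}_n)$, which is itself a p-ASF by Theorem \ref{OURS}(ii). First I would verify the identity $S_{g,\omega} = S^{-1}$: for $x \in \mathcal{X}$,
\[
S_{g,\omega}x = \sum_{n=1}^\infty g_n(x)\,\omega_n = \sum_{n=1}^\infty f_n(S^{-1}x)\,S^{-1}\tau_n = S^{-1}\!\left(\sum_{n=1}^\infty f_n(S^{-1}x)\,\tau_n\right) = S^{-1}\big(S(S^{-1}x)\big) = S^{-1}x ,
\]
where moving the bounded operator $S^{-1}$ through the convergent series is legitimate, and the inner sum equals $S(S^{-1}x)$ by the very definition of the frame operator.

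For (i): since $S_{g,\omega} = S^{-1}$, the canonical dual of $(\{g_n\}_n, \{\omega_n\}_n)$ is $\big(\{g_n S_{g,\omega}^{-1}\}_n, \{S_{g,\omega}^{-1}\omega_n\}_n\big) = \big(\{f_n S^{-1}S\}_n, \{S S^{-1}\tau_n\}_n\big) = (\{f_n\}_n, \{\tau_n\}_n)$, which is exactly the claim.

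For (ii) and (iii): recall from the discussion following Definition \ref{ASFDEF} that the optimal lower and upper ASF bounds of a p-ASF with frame operator $R$ are $\|R^{-1}\|^{-1}$ and $\|R\|$. Thus $a$ and $b$ being ASF bounds for $(\{f_n\}_n, \{\tau_n\}_n)$ means $a \le \|S^{-1}\|^{-1}$ and $\|S\| \le b$. Applying the same description to the canonical dual, whose frame operator is $S^{-1}$ by the identity above, its optimal lower bound is $\|(S^{-1})^{-1}\|^{-1} = \|S\|^{-1} \ge 1/b$ and its optimal upper bound is $\|S^{-1}\| \le 1/a$; hence $1/b$ and $1/a$ are lower and upper ASF bounds for the canonical dual, which is (ii). If moreover $a,b$ are optimal, the two inequalities become equalities $a = \|S^{-1}\|^{-1}$ and $b = \|S\|$, so the optimal bounds of the canonical dual are exactly $\|S\|^{-1} = 1/b$ and $\|S^{-1}\| = 1/a$, which is (iii).

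I do not anticipate a genuine obstacle here: the only step needing a word of justification is interchanging $S^{-1}$ with the infinite sum in the computation of $S_{g,\omega}$, and this is immediate from boundedness of $S^{-1}$ together with convergence of $\sum_n f_n(S^{-1}x)\tau_n$. Everything else is bookkeeping resting on the single identity $S_{g,\omega} = S_{f,\tau}^{-1}$.
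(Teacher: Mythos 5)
Your proof is correct, and it is exactly the computation the paper has in mind: the authors state this theorem without proof (``the following theorem follows easily''), and the intended argument is precisely your single identity $S_{g,\omega}=S_{f,\tau}^{-1}$ combined with the paper's observation that the optimal ASF bounds are $\|S_{f,\tau}^{-1}\|^{-1}$ and $\|S_{f,\tau}\|$. Nothing is missing; the interchange of $S_{f,\tau}^{-1}$ with the series is justified by boundedness, as you note.
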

One can naturally ask when  a p-ASF has unique dual. An affirmative answer is given in the following result.
\begin{proposition}
	Let $ (\{f_n\}_{n}, \{\tau_n\}_{n}) $ be a  p-ASF for $ \mathcal{X}$. If $\{\tau_n\}_{n}$ is a Schauder basis for   $\mathcal{X}$ and $ f_k(\tau_n)=\delta_{k,n},\forall k,n \in \mathbb{N}$, then  $ (\{f_n\}_{n}, \{\tau_n\}_{n}) $ has unique dual.
\end{proposition}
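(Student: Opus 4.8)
The plan is to first unwind the hypotheses: I would observe that they force $S_{f,\tau}=I_{\mathcal{X}}$, so that $(\{f_n\}_n,\{\tau_n\}_n)$ is actually a Parseval p-ASF and $\{f_n\}_n$ is precisely the sequence of coordinate functionals of the Schauder basis $\{\tau_n\}_n$. Indeed, since $\{\tau_n\}_n$ is a Schauder basis, every $x\in\mathcal{X}$ has a unique expansion $x=\sum_{n=1}^\infty c_n(x)\tau_n$ with continuous coefficient functionals $c_n$; applying the continuous functional $f_k$ and using $f_k(\tau_n)=\delta_{k,n}$ gives $c_k(x)=f_k(x)$ for all $x$, hence $x=\sum_{n=1}^\infty f_n(x)\tau_n$ and $S_{f,\tau}=I_{\mathcal{X}}$.

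Next I would take an arbitrary dual $(\{g_n\}_n,\{\omega_n\}_n)$ of $(\{f_n\}_n,\{\tau_n\}_n)$. By Definition \ref{SIMILARITYMINE},
\[
x=\sum_{n=1}^\infty g_n(x)\tau_n=\sum_{n=1}^\infty f_n(x)\omega_n, \quad \forall x\in\mathcal{X}.
\]
From the first equality and the uniqueness of the expansion of $x$ with respect to the Schauder basis $\{\tau_n\}_n$ (whose coefficient functionals were just identified as $\{f_n\}_n$), I conclude $g_n(x)=f_n(x)$ for all $n$ and all $x$, i.e. $g_n=f_n$ for every $n$. To pin down $\{\omega_n\}_n$, I would substitute $x=\tau_m$ into the second equality: since $f_n(\tau_m)=\delta_{n,m}$, the series collapses to a single term and yields $\omega_m=\tau_m$ for every $m$. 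Hence $(\{g_n\}_n,\{\omega_n\}_n)=(\{f_n\}_n,\{\tau_n\}_n)$, proving uniqueness.

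The only point that needs a moment's care is to check that \emph{both} sequences of the dual are determined — one might worry that a dual could perturb $\{g_n\}_n$ away from $\{f_n\}_n$ while compensating in $\{\omega_n\}_n$. But this is ruled out automatically here: the basis reconstruction equation $x=\sum_n g_n(x)\tau_n$ forces the functionals, and then evaluation of $x=\sum_n f_n(x)\omega_n$ at the basis vectors $\tau_m$ forces the reconstruction vectors, with no trade-off possible. Consequently there is no real obstacle; the argument is essentially the uniqueness of biorthogonal coefficients for a Schauder basis, applied twice.
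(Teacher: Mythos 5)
Your proof is correct and uses essentially the same two ingredients as the paper: uniqueness of coefficients with respect to the Schauder basis $\{\tau_n\}_n$ to pin down the functionals, and evaluation of the reconstruction formula at the basis vectors together with $f_k(\tau_n)=\delta_{k,n}$ to pin down the vectors. The only (harmless) difference is that the paper compares two arbitrary duals directly, while you first observe $S_{f,\tau}=I_{\mathcal{X}}$ and show every dual coincides with $(\{f_n\}_n,\{\tau_n\}_n)$ itself, which is a slightly more explicit conclusion but the same argument.
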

\begin{proof}
	Let $ (\{g_n \}_{n}, \{\omega_n \}_{n}) $  and $ (\{u_n \}_{n}, \{\rho_n \}_{n}) $ be two dual p-ASFs for $ (\{f_n\}_{n}, \{\tau_n\}_{n}) $. Then 
	\begin{align*}
	\sum_{n=1}^\infty(g_n(x)-u_n(x))\tau_n=0= \sum_{n=1}^\infty f_n(x)(\omega_n-\rho_n), \quad \forall x \in \mathcal{X}.
	\end{align*}
 First equality gives $ g_n=u_n, \forall n \in \mathbb{N}$ and by evaluating second equality at a fixed $ \tau_k$ gives $ \omega_k=\rho_k$. Since $k$ was arbitrary, proposition follows.
\end{proof}
We now characterize dual frames  by using analysis and synthesis operators.
\begin{proposition}\label{ORTHOGONALPRO}
For two p-ASFs $ (\{f_n\}_{n}, \{\tau_n\}_{n}) $ and $ (\{g_n \}_{n}, \{\omega_n \}_{n}) $ for $\mathcal{X}$, the following are equivalent.
\begin{enumerate}[\upshape(i)]
	\item  $ (\{g_n \}_{n}, \{\omega_n \}_{n}) $ is a dual  for $ (\{f_n \}_{n}, \{\tau_n \}_{n}) $.
	\item $\theta_\tau\theta_g =\theta_\omega\theta_f =I_\mathcal{X}$.
\end{enumerate}
\end{proposition}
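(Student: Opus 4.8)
The plan is simply to unwind the definitions of the analysis and synthesis operators and to observe that the two defining equations of a dual p-ASF are, verbatim, the assertions that $\theta_\tau\theta_g$ and $\theta_\omega\theta_f$ act as the identity on $\mathcal{X}$. So this is a definition-chasing argument; the only point requiring a word of care is that all the relevant operators are bounded, which is exactly what the p-ASF hypothesis (Definition \ref{PASFDEF}) supplies.

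First I would record the standing facts. Since both $(\{f_n\}_n,\{\tau_n\}_n)$ and $(\{g_n\}_n,\{\omega_n\}_n)$ are p-ASFs, Definition \ref{PASFDEF} guarantees that $\theta_f,\theta_g\colon\mathcal{X}\to\ell^p(\mathbb{N})$ and $\theta_\tau,\theta_\omega\colon\ell^p(\mathbb{N})\to\mathcal{X}$ are well-defined bounded linear operators. Consequently the compositions $\theta_\tau\theta_g$ and $\theta_\omega\theta_f$ are well-defined bounded linear operators from $\mathcal{X}$ to $\mathcal{X}$, so both equalities in statement (ii) are meaningful. Next, for a fixed $x\in\mathcal{X}$ I would compute, using (\ref{ALIGNP}) and then (\ref{AAA}), that $\theta_\tau\theta_g x=\theta_\tau(\{g_n(x)\}_n)=\sum_{n=1}^\infty g_n(x)\tau_n$, the series converging in $\mathcal{X}$ because $\theta_g x\in\ell^p(\mathbb{N})$ and $\theta_\tau$ is bounded; symmetrically $\theta_\omega\theta_f x=\sum_{n=1}^\infty f_n(x)\omega_n$.

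With these two identities in hand, the equivalence closes immediately: condition (i) says precisely that $\sum_{n=1}^\infty g_n(x)\tau_n=x$ and $\sum_{n=1}^\infty f_n(x)\omega_n=x$ for every $x\in\mathcal{X}$, which by the computations above is the same as $\theta_\tau\theta_g x=x=\theta_\omega\theta_f x$ for every $x\in\mathcal{X}$, i.e.\ $\theta_\tau\theta_g=I_\mathcal{X}=\theta_\omega\theta_f$, which is (ii). Every step is an equality that holds for all $x$, so the argument runs verbatim in both directions and the proof of (i)$\Leftrightarrow$(ii) is complete. I do not anticipate any genuine obstacle; the only subtlety worth flagging explicitly in the write-up is the interchange $\theta_\tau(\{a_n\}_n)=\sum_{n} a_n\tau_n$ together with the boundedness of the four operators, both of which are built into the p-ASF assumption.
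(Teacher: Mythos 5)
Your proof is correct and is exactly the routine definition-unwinding the paper has in mind — indeed the paper states this proposition without proof, treating $\theta_\tau\theta_g x=\sum_n g_n(x)\tau_n$ and $\theta_\omega\theta_f x=\sum_n f_n(x)\omega_n$ as immediate from Definition \ref{PASFDEF}. Nothing is missing; your explicit mention of boundedness and convergence is a harmless extra precaution.
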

Like Lemmas \ref{LILEMMA1}, \ref{LILEMMA2} and Theorem \ref{LITHM}, we now  characterize dual frames using standard Schauder basis for $\ell^p(\mathbb{N})$.
  \begin{lemma}\label{ASFLEMMA1}
  	Let  $ (\{f_n \}_{n}, \{\tau_n \}_{n}) $  be a  p-ASF for   $\mathcal{X}$. Then a  p-ASF  $ (\{g_n \}_{n}, \{\omega_n \}_{n}) $ for $\mathcal{X}$ is a dual  for $ (\{f_n \}_{n}, \{\tau_n \}_{n}) $ if and only if
  	\begin{align*}
  	g_n=h_n U, \quad \omega_n=Ve_n, \quad \forall n \in \mathbb{N},
  	\end{align*} 
   where $ U:\mathcal{X} \rightarrow\ell^p(\mathbb{N})$ is  a bounded right-inverse of $ \theta_\tau$, and  $V: \ell^p(\mathbb{N}) \rightarrow \mathcal{X}$ is a bounded left-inverse of $ \theta_f$ such that $ VU$ is bounded invertible.
  \end{lemma}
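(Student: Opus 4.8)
The plan is to reduce the statement entirely to three tools already available: Theorem \ref{THAFSCHAR}, which says precisely which pairs $(\{g_n\}_n,\{\omega_n\}_n)$ are p-ASFs; Proposition \ref{ORTHOGONALPRO}, which recasts ``$(\{g_n\}_n,\{\omega_n\}_n)$ is a dual for $(\{f_n\}_n,\{\tau_n\}_n)$'' as the operator identity $\theta_\tau\theta_g=\theta_\omega\theta_f=I_\mathcal{X}$; and the splitting $S_{g,\omega}=\theta_\omega\theta_g$ from Theorem \ref{OURS}(v). The elementary bookkeeping fact used throughout is that, for any p-ASF $(\{g_n\}_n,\{\omega_n\}_n)$, the analysis operator $\theta_g$ is the map $x\mapsto\{g_n(x)\}_n$ and the synthesis operator $\theta_\omega$ is the map $\{a_n\}_n\mapsto\sum_n a_n\omega_n$, so that $h_n\circ\theta_g=g_n$ for every $n$ (since $h_n$ just reads off the $n$-th coordinate in $\ell^p(\mathbb{N})$) and $\theta_\omega e_n=\omega_n$ for every $n$.

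For the forward implication I would put $U:=\theta_g$ and $V:=\theta_\omega$; these are bounded linear by the definition of a p-ASF, and the two identities just noted give $g_n=h_nU$ and $\omega_n=Ve_n$ for all $n$. Since $(\{g_n\}_n,\{\omega_n\}_n)$ is a dual, Proposition \ref{ORTHOGONALPRO} yields $\theta_\tau U=\theta_\tau\theta_g=I_\mathcal{X}$ and $V\theta_f=\theta_\omega\theta_f=I_\mathcal{X}$, i.e. $U$ is a bounded right-inverse of $\theta_\tau$ and $V$ a bounded left-inverse of $\theta_f$. Finally $VU=\theta_\omega\theta_g=S_{g,\omega}$ by Theorem \ref{OURS}(v), which is bounded invertible because $(\{g_n\}_n,\{\omega_n\}_n)$ is a p-ASF.

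For the converse, suppose $U,V$ are as in the statement and set $g_n=h_nU$, $\omega_n=Ve_n$. Because $VU$ is bounded invertible, Theorem \ref{THAFSCHAR} immediately gives that $(\{g_n\}_n,\{\omega_n\}_n)$ is a p-ASF, and following the proof of that theorem one reads off $\theta_g=U$ and $\theta_\omega=V$. Then $\theta_\tau\theta_g=\theta_\tau U=I_\mathcal{X}$ and $\theta_\omega\theta_f=V\theta_f=I_\mathcal{X}$ since $U$ (resp. $V$) is a right- (resp. left-) inverse of $\theta_\tau$ (resp. $\theta_f$), so Proposition \ref{ORTHOGONALPRO} shows $(\{g_n\}_n,\{\omega_n\}_n)$ is a dual for $(\{f_n\}_n,\{\tau_n\}_n)$, completing the argument.

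I do not anticipate a genuine obstacle here: the content is essentially a repackaging of the preceding results. The one place requiring care is keeping the identifications $\theta_g\leftrightarrow U$ and $\theta_\omega\leftrightarrow V$ straight — making sure the coordinate functionals $\{h_n\}_n$ act on the $\ell^p$-side of $U$ and the unit vectors $\{e_n\}_n$ feed into $V$ from the $\ell^p$-side — and confirming that the p-ASF hypothesis really does supply the boundedness of $\theta_g$ and $\theta_\omega$ needed to form the compositions above. Once those bookkeeping points are fixed, everything chains together.
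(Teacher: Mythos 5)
Your proof is correct and follows essentially the same route as the paper: in both directions you take $U=\theta_g$, $V=\theta_\omega$ (resp.\ identify $\theta_g=U$, $\theta_\omega=V$), use the operator characterization of duality $\theta_\tau\theta_g=\theta_\omega\theta_f=I_\mathcal{X}$ from Proposition \ref{ORTHOGONALPRO}, and note $VU=\theta_\omega\theta_g=S_{g,\omega}$ is invertible, with Theorem \ref{THAFSCHAR} supplying the p-ASF property in the converse. No gaps.
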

  \begin{proof}
  	$(\Leftarrow)$ From the ``if" part of proof of Theorem \ref{THAFSCHAR}, we get that $ (\{g_n \}_{n}, \{\omega_n \}_{n}) $ is a p-ASF for $\mathcal{X}$. We have to check for duality of $ (\{g_n \}_{n}, \{\omega_n \}_{n}) $. For, $\theta_\tau\theta_g=\theta_\tau U=I_\mathcal{X} $, $ \theta_\omega\theta_f=V\theta_f =I_\mathcal{X}$.\\
  	$(\Rightarrow)$ Let $ (\{g_n \}_{n}, \{\omega_n \}_{n}) $ be a dual p-ASF for  $ (\{f_n \}_{n}, \{\tau_n \}_{n}) $.  Then $\theta_\tau\theta_g =I_\mathcal{X} $, $ \theta_\omega\theta_f =I_\mathcal{X}$. Define $ U\coloneqq\theta_g, V\coloneqq\theta_\omega.$ Then $ U:\mathcal{X} \rightarrow\ell^p(\mathbb{N})$ is a bounded right-inverse of $ \theta_\tau$, and  $V: \ell^p(\mathbb{N}) \rightarrow \mathcal{X}$ is  a bounded left-inverse of $ \theta_f$ such that the operator $ VU=\theta_\omega\theta_g=S_{g,\omega}$ is invertible. Further,
  	\begin{align*}
  	(h_nU)x=h_n\left(\sum_{k=1}^\infty g_k(x)e_k\right)=\sum_{k=1}^\infty g_k(x)h_n(e_k)=g_n(x), \quad \forall x \in \mathcal{X}
  	\end{align*} 
  	 and $Ve_n=\theta_\omega e_n=\omega_n, \forall n \in \mathbb{N} $.
  \end{proof}
  \begin{lemma}\label{ASFLEMMA2}
  Let $ (\{f_n \}_{n}, \{\tau_n \}_{n}) $ be a  p-ASF for   $\mathcal{X}$. Then 
  \begin{enumerate}[\upshape(i)]
  	\item $R: \mathcal{X} \rightarrow \ell^p(\mathbb{N})$ is a bounded right-inverse of $ \theta_\tau$  if and only if 
  	\begin{align*}
  	R=\theta_fS_{f,\tau}^{-1}+(I_{\ell^p(\mathbb{N})}-\theta_fS_{f,\tau}^{-1}\theta_\tau)U
  	\end{align*} where $U:\mathcal{X} \to \ell^p(\mathbb{N})$ is a bounded linear operator.
  	\item  $ L:\ell^p(\mathbb{N})\rightarrow \mathcal{X}$ is a bounded left-inverse of $ \theta_f$ if and only if 
  	\begin{align*}
  	L=S_{f,\tau}^{-1}\theta_\tau+V(I_{\ell^p(\mathbb{N})}-\theta_fS_{f,\tau}^{-1}\theta_\tau),
  	\end{align*} 
  	 where $V:\ell^p(\mathbb{N}) \to \mathcal{X}$ is a bounded linear operator. 
  \end{enumerate}		
  \end{lemma}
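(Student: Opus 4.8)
The plan is to transplant Li's argument (Lemma \ref{LILEMMA2}) to the Banach space setting, using the splitting $S_{f,\tau}=\theta_\tau\theta_f$ from Theorem \ref{OURS}(v) and the projection $P_{f,\tau}=\theta_fS_{f,\tau}^{-1}\theta_\tau$ from Theorem \ref{OURS}(vi). The two parts are mirror images of each other, so I would prove (ii) in detail and then run the symmetric computation for (i). The only conceptual point is to identify the correct ``canonical'' one-sided inverses: $S_{f,\tau}^{-1}\theta_\tau$ for the left-inverse problem and $\theta_fS_{f,\tau}^{-1}$ for the right-inverse problem.

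For (ii), I would first record that $S_{f,\tau}^{-1}\theta_\tau$ is indeed a bounded left-inverse of $\theta_f$, since $(S_{f,\tau}^{-1}\theta_\tau)\theta_f=S_{f,\tau}^{-1}(\theta_\tau\theta_f)=S_{f,\tau}^{-1}S_{f,\tau}=I_\mathcal{X}$. For the ``if'' direction, substitute the claimed formula for $L$ and compute $L\theta_f$: the term $V(\theta_f-\theta_fS_{f,\tau}^{-1}\theta_\tau\theta_f)$ vanishes because $\theta_\tau\theta_f=S_{f,\tau}$ forces $\theta_f-\theta_fS_{f,\tau}^{-1}S_{f,\tau}=0$, leaving $L\theta_f=I_\mathcal{X}$; boundedness of $L$ is automatic as a sum and composition of bounded operators. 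For the ``only if'' direction, given a bounded left-inverse $L$ of $\theta_f$, set $V:=L$ and use $L(I_{\ell^p(\mathbb{N})}-\theta_fS_{f,\tau}^{-1}\theta_\tau)=L-(L\theta_f)S_{f,\tau}^{-1}\theta_\tau=L-S_{f,\tau}^{-1}\theta_\tau$, which rearranges to exactly the asserted identity. (Equivalently: $L-S_{f,\tau}^{-1}\theta_\tau=L-LP_{f,\tau}=L(I_{\ell^p(\mathbb{N})}-P_{f,\tau})$.)

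For (i), the symmetric facts are that $\theta_fS_{f,\tau}^{-1}$ is a bounded right-inverse of $\theta_\tau$ since $\theta_\tau(\theta_fS_{f,\tau}^{-1})=(\theta_\tau\theta_f)S_{f,\tau}^{-1}=S_{f,\tau}S_{f,\tau}^{-1}=I_\mathcal{X}$; the ``if'' direction follows by computing $\theta_\tau R$ from the claimed formula and observing $\theta_\tau-\theta_\tau\theta_fS_{f,\tau}^{-1}\theta_\tau=\theta_\tau-\theta_\tau=0$; and for the ``only if'' direction, given a bounded right-inverse $R$ of $\theta_\tau$, one sets $U:=R$ and uses $(\theta_fS_{f,\tau}^{-1}\theta_\tau)R=\theta_fS_{f,\tau}^{-1}(\theta_\tau R)=\theta_fS_{f,\tau}^{-1}$, i.e. $R-\theta_fS_{f,\tau}^{-1}=R-P_{f,\tau}R=(I_{\ell^p(\mathbb{N})}-P_{f,\tau})R$, to recover the formula. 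I do not expect any genuine obstacle here: the argument is a direct computation, and the only thing requiring care is bookkeeping the order of composition, since—unlike the self-adjoint Hilbert space picture where the synthesis operator is $\theta_\tau^*$—here $\theta_f$ and $\theta_\tau$ are genuinely distinct maps between $\mathcal{X}$ and $\ell^p(\mathbb{N})$, so the content is organizational rather than conceptual.
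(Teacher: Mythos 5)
Your proposal is correct and follows essentially the same route as the paper: both directions of each part are verified by the same direct computations using $S_{f,\tau}=\theta_\tau\theta_f$, with the choices $U\coloneqq R$ and $V\coloneqq L$ in the ``only if'' directions. Nothing is missing.
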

  \begin{proof}
  \begin{enumerate}[\upshape(i)]
  	\item  $(\Leftarrow)$  $\theta_\tau(\theta_fS_{f,\tau}^{-1}+(I_{\ell^p(\mathbb{N})}-\theta_fS_{f,\tau}^{-1}\theta_\tau)U)=I_\mathcal{X}+\theta_\tau U-I_\mathcal{X}\theta_\tau U=I_\mathcal{X}$. Therefore $\theta_fS_{f,\tau}^{-1}+(I_{\ell^p(\mathbb{N})}-\theta_fS_{f,\tau}^{-1}\theta_\tau)U$ is a bounded right-inverse of $ \theta_\tau$.  
  	
  	$(\Rightarrow)$  Define $U\coloneqq R $. Then $\theta_fS_{f,\tau}^{-1}+(I_{\ell^p(\mathbb{N})}-\theta_fS_{f,\tau}^{-1}\theta_\tau)U=\theta_fS_{f,\tau}^{-1}+(I_{\ell^p(\mathbb{N})}-\theta_fS_{f,\tau}^{-1}\theta_\tau)R=\theta_fS_{f,\tau}^{-1}+R-\theta_fS_{f,\tau}^{-1}=R$.
  	\item
  	$(\Leftarrow)$  $(S_{f,\tau}^{-1}\theta_\tau+V(I_{\ell^p(\mathbb{N})}-\theta_fS_{f,\tau}^{-1}\theta_\tau))\theta_f=I_\mathcal{X}+V\theta_f-V\theta_fI_\mathcal{X}=I_\mathcal{X}$. Therefore  $S_{f,\tau}^{-1}\theta_\tau+V(I_{\ell^p(\mathbb{N})}-\theta_fS_{f,\tau}^{-1}\theta_\tau)$ is a bounded left-inverse of $\theta_f$.
  	
  	$(\Rightarrow)$  Define $V\coloneqq L$. Then $S_{f,\tau}^{-1}\theta_\tau+V(I_{\ell^p(\mathbb{N})}-\theta_fS_{f,\tau}^{-1}\theta_\tau) =S_{f,\tau}^{-1}\theta_\tau+L(I_{\ell^p(\mathbb{N})}-\theta_fS_{f,\tau}^{-1}\theta_\tau)=S_{f,\tau}^{-1}\theta_\tau+L-S_{f,\tau}^{-1}\theta_\tau= L$.
  \end{enumerate}		
  \end{proof}
  \begin{theorem}\label{ALLDUAL}
  	   Let $ (\{f_n \}_{n}, \{\tau_n \}_{n}) $ be a  p-ASF for   $\mathcal{X}$. Then a  p-ASF  $ (\{g_n \}_{n}, \{\omega_n \}_{n}) $ for $\mathcal{X}$ is a dual  for $ (\{f_n \}_{n}, \{\tau_n \}_{n}) $ if and only if
  	   \begin{align*}
  	   &g_n=f_nS_{f,\tau}^{-1}+h_nU-f_nS_{f,\tau}^{-1}\theta_\tau U,\\
  	   &\omega_n=S_{f,\tau}^{-1}\tau_n+Ve_n-V\theta_fS_{f,\tau}^{-1}\tau_n, \quad \forall n \in \mathbb{N}
  	   \end{align*}
  	such that the operator 
  	\begin{align*}
  	S_{f,\tau}^{-1}+VU-V\theta_fS_{f,\tau}^{-1}\theta_\tau U
  	\end{align*}
  	is bounded invertible, where   $U:\mathcal{X} \to \ell^p(\mathbb{N})$ and $ V:\ell^p(\mathbb{N})\to \mathcal{X}$ are bounded linear operators.
  \end{theorem}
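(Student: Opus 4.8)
\emph{Proof proposal.} The plan is to deduce the statement by composing Lemma~\ref{ASFLEMMA1} with Lemma~\ref{ASFLEMMA2}, with essentially one computation in between. Lemma~\ref{ASFLEMMA1} characterizes a dual p-ASF $(\{g_n\}_n,\{\omega_n\}_n)$ of $(\{f_n\}_n,\{\tau_n\}_n)$ as one for which $g_n=h_nR$ and $\omega_n=Le_n$ for all $n$, where $R\colon\mathcal{X}\to\ell^p(\mathbb{N})$ is a bounded right-inverse of $\theta_\tau$, $L\colon\ell^p(\mathbb{N})\to\mathcal{X}$ is a bounded left-inverse of $\theta_f$, and $LR$ is bounded invertible. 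Lemma~\ref{ASFLEMMA2} then says that every such $R$ and $L$ has the form $R=\theta_fS_{f,\tau}^{-1}+(I_{\ell^p(\mathbb{N})}-\theta_fS_{f,\tau}^{-1}\theta_\tau)U$ and $L=S_{f,\tau}^{-1}\theta_\tau+V(I_{\ell^p(\mathbb{N})}-\theta_fS_{f,\tau}^{-1}\theta_\tau)$ for arbitrary bounded $U\colon\mathcal{X}\to\ell^p(\mathbb{N})$ and $V\colon\ell^p(\mathbb{N})\to\mathcal{X}$. Feeding these forms into $g_n=h_nR$, $\omega_n=Le_n$ and into $LR$ is what produces the three displayed expressions, so the proof is just this substitution read in both directions.

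First I would note the two trivial identities $h_n\theta_f=f_n$ (as a functional on $\mathcal{X}$) and $\theta_\tau e_n=\tau_n$, immediate from the definitions of $\theta_f$, $\theta_\tau$ and of the coordinate functionals $\{h_n\}_n$. Pre-composing the parametrized $R$ with $h_n$ and using $h_n\theta_f=f_n$ twice gives $g_n=h_nR=f_nS_{f,\tau}^{-1}+h_nU-f_nS_{f,\tau}^{-1}\theta_\tau U$, while applying the parametrized $L$ to $e_n$ and using $\theta_\tau e_n=\tau_n$ twice gives $\omega_n=Le_n=S_{f,\tau}^{-1}\tau_n+Ve_n-V\theta_fS_{f,\tau}^{-1}\tau_n$. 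These are the first two formulas in the statement.

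The step needing care, and the only place with real bookkeeping, is to verify that the composite $LR$ of the two parametrized operators equals $S_{f,\tau}^{-1}+VU-V\theta_fS_{f,\tau}^{-1}\theta_\tau U$, so that the requirement ``$LR$ bounded invertible'' of Lemma~\ref{ASFLEMMA1} becomes precisely the invertibility hypothesis in the statement. Expanding the product of the two brackets gives four terms; using $\theta_\tau\theta_f=S_{f,\tau}$ from Theorem~\ref{OURS}(v), the leading term collapses to $S_{f,\tau}^{-1}$ and the two cross terms vanish, and the remaining term is $V(I_{\ell^p(\mathbb{N})}-P_{f,\tau})U$ with $P_{f,\tau}=\theta_fS_{f,\tau}^{-1}\theta_\tau$; since $I_{\ell^p(\mathbb{N})}-P_{f,\tau}$ is idempotent by Theorem~\ref{OURS}(vi), this equals $VU-V\theta_fS_{f,\tau}^{-1}\theta_\tau U$, which completes the identity.

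Finally, the ``if'' direction is just Lemma~\ref{ASFLEMMA2} inserted into the ``if'' part of Lemma~\ref{ASFLEMMA1} (the invertibility hypothesis being transported by the identity just proved). For the ``only if'' direction, given a dual $(\{g_n\}_n,\{\omega_n\}_n)$ I would simply take $U\coloneqq\theta_g$ and $V\coloneqq\theta_\omega$: the duality identities $\theta_\tau\theta_g=\theta_\omega\theta_f=I_\mathcal{X}$ from Proposition~\ref{ORTHOGONALPRO}, together with $h_n\theta_g=g_n$ and $\theta_\omega e_n=\omega_n$, collapse the three right-hand sides to $g_n$, $\omega_n$ and $S_{g,\omega}$ respectively, and $S_{g,\omega}$ is invertible because $(\{g_n\}_n,\{\omega_n\}_n)$ is a p-ASF. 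I expect the $LR$ computation to be the (mild) crux: the trick is to see $\theta_fS_{f,\tau}^{-1}\theta_\tau$ as the idempotent $P_{f,\tau}$ and exploit $(I_{\ell^p(\mathbb{N})}-P_{f,\tau})^2=I_{\ell^p(\mathbb{N})}-P_{f,\tau}$ rather than multiplying everything out by hand.
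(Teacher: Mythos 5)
Your proposal is correct and follows the paper's own route: it deduces the theorem by combining Lemma~\ref{ASFLEMMA1} with Lemma~\ref{ASFLEMMA2}, substituting the parametrized right- and left-inverses into $g_n=h_nR$, $\omega_n=Le_n$, and expanding the product $LR$ to obtain $S_{f,\tau}^{-1}+VU-V\theta_fS_{f,\tau}^{-1}\theta_\tau U$. The only difference is cosmetic: you organize the expansion via the idempotent $P_{f,\tau}$ (and verify the converse directly with $U=\theta_g$, $V=\theta_\omega$), whereas the paper simply cites a direct expansion; the content is the same.
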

  \begin{proof}
  	Lemmas \ref{ASFLEMMA1} and  \ref{ASFLEMMA2} give the characterization of dual frame as 
  \begin{align*}
  	&g_n=h_n\theta_fS_{f,\tau}^{-1}+h_nU-h_n\theta_fS_{f,\tau}^{-1}\theta_\tau U=f_nS_{f,\tau}^{-1}+h_nU-f_nS_{f,\tau}^{-1}\theta_\tau U,\\
  	&\omega_n=S_{f,\tau}^{-1}\theta_\tau e_n+Ve_n-V\theta_fS_{f,\tau}^{-1}\theta_\tau e_n=S_{f,\tau}^{-1}\tau_n+Ve_n-V\theta_fS_{f,\tau}^{-1}\tau_n, \quad \forall n \in \mathbb{N}
  	\end{align*}
  	such that the operator 
  	$$(S_{f,\tau}^{-1}\theta_\tau+V(I_{\ell^p(\mathbb{N})}-\theta_fS_{f,\tau}^{-1}\theta_\tau))(\theta_fS_{f,\tau}^{-1}+(I_{\ell^p(\mathbb{N})}-\theta_fS_{f,\tau}^{-1}\theta_\tau)U) $$
  	is bounded invertible, where $U:\mathcal{X} \to \ell^p(\mathbb{N})$ and $ V:\ell^p(\mathbb{N})\to \mathcal{X}$ are bounded linear operators. By a direct expansion and simplification we get 
  	\begin{align*}
  	(S_{f,\tau}^{-1}\theta_\tau+V(I_{\ell^p(\mathbb{N})}-\theta_fS_{f,\tau}^{-1}\theta_\tau))(\theta_fS_{f,\tau}^{-1}+(I_{\ell^p(\mathbb{N})}-\theta_fS_{f,\tau}^{-1}\theta_\tau)U)
  	=S_{f,\tau}^{-1}+VU-V\theta_fS_{f,\tau}^{-1}\theta_\tau U.
  	\end{align*}
  \end{proof}
It is known that a bounded linear operator from $\ell^2(\mathbb{N}) $ to $\mathcal{H} $ is given by a Bessel sequence (for instance, see Theorem 3.1.3 in \cite{OLEBOOK}). Thus, for Hilbert spaces, Theorem \ref{ALLDUAL} becomes Theorem \ref{LITHM}.  Now we generalize Definition \ref{SIMILARDEFHILBERT} to Banach spaces.
  \begin{definition}
  	Two p-ASFs $ (\{f_n\}_{n}, \{\tau_n\}_{n}) $ and $ (\{g_n \}_{n}, \{\omega_n \}_{n}) $ for $\mathcal{X}$   are said to be similar or equivalent if there exist bounded invertible operators  $T_{f,g}, T_{\tau,\omega} :\mathcal{X} \to \mathcal{X}$ such that 
  	\begin{align*}
  	 g_n=f_nT_{f,g},\quad  \omega_n= T_{\tau,\omega}\tau_n, \quad \forall  n \in \mathbb{N}.
  	\end{align*}
  \end{definition}
Since the operators giving similarity are bounded invertible, the notion of similarity is symmetric. Further, a routine calculation shows that it is an equivalence relation (hence the name equivalent) on the set 
\begin{align*}
\{(\{f_n\}_{n}, \{\tau_n\}_{n}): (\{f_n\}_{n}, \{\tau_n\}_{n}) \text{ is a p-ASF for } \mathcal{X}\}.
\end{align*}
We now characterize similarity using just operators. In the sequel, given a p-ASF $ (\{f_n\}_{n}, \{\tau_n\}_{n}) $,  we set $P_{f, \tau}\coloneqq \theta_fS_{f,\tau}^{-1}\theta_\tau$. 
  \begin{theorem}\label{SEQUENTIALSIMILARITY}
  	For two p-ASFs $ (\{f_n\}_{n}, \{\tau_n\}_{n}) $ and $ (\{g_n \}_{n}, \{\omega_n \}_{n}) $ for $\mathcal{X}$, the following are equivalent.
  	\begin{enumerate}[\upshape(i)]
  		\item   $g_n=f_nT_{f, g} , \omega_n=T_{\tau,\omega}\tau_n,  \forall  n \in \mathbb{N}$, for some bounded invertible operators $T_{f,g}, T_{\tau,\omega}:\mathcal{X} \to \mathcal{X}.$ 
  		\item $\theta_g=\theta_f T_{f,g}, \theta_\omega=T_{\tau,\omega}\theta_\tau  $ for some bounded invertible operators $T_{f,g}, T_{\tau,\omega}:\mathcal{X} \to \mathcal{X}.$
  		\item $P_{g,\omega}=P_{f, \tau}.$
  	\end{enumerate}
  	If one of the above conditions is satisfied, then  invertible operators in  $\operatorname{(i)}$ and  $\operatorname{(ii)}$ are unique and are given by  $T_{f,g}= S_{f,\tau}^{-1}\theta_\tau\theta_g, T_{\tau, \omega}=\theta_\omega\theta_fS_{f,\tau}^{-1}.$ In the case that $ (\{f_n \}_{n}, \{\tau_n \}_{n}) $ is a Parseval p-ASF, then $ (\{g_n \}_{n}, \{\omega_n \}_{n}) $ is  a Parseval p-ASF if and only if $T_{\tau, \omega}T_{f,g} =I_\mathcal{X}$   if and only if $ T_{f,g}T_{\tau, \omega} =I_\mathcal{X}$. 
  \end{theorem}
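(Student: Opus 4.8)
\emph{Proof proposal.} The plan is to run the implications (i) $\Leftrightarrow$ (ii), then (ii) $\Rightarrow$ (iii) $\Rightarrow$ (ii), and finally dispose of the uniqueness clause and the Parseval addendum. Throughout I will use freely the identities $S_{f,\tau}=\theta_\tau\theta_f$ and $P_{f,\tau}=\theta_fS_{f,\tau}^{-1}\theta_\tau$ from Theorem \ref{OURS} (and their analogues for $(\{g_n\}_n,\{\omega_n\}_n)$), together with $h_n\theta_f=f_n$ and $\theta_\tau e_n=\tau_n$.

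First, (i) and (ii) are really the same statement. Indeed $g_n=f_nT_{f,g}$ for every $n$ is equivalent to $h_n\theta_g=h_n\theta_fT_{f,g}$ for every $n$, hence, since a bounded operator into $\ell^p(\mathbb{N})$ is determined by its compositions with all the $h_n$, to $\theta_g=\theta_fT_{f,g}$; likewise $\omega_n=T_{\tau,\omega}\tau_n$ for every $n$ is equivalent, using $\theta_\omega e_n=\omega_n$, $\theta_\tau e_n=\tau_n$ and the continuity of $T_{\tau,\omega}$ (which lets it pass through the series defining $\theta_\tau$), to $\theta_\omega=T_{\tau,\omega}\theta_\tau$. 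The same pair of operators serves both formulations. For (ii) $\Rightarrow$ (iii), Theorem \ref{OURS}(v) gives $S_{g,\omega}=\theta_\omega\theta_g=T_{\tau,\omega}(\theta_\tau\theta_f)T_{f,g}=T_{\tau,\omega}S_{f,\tau}T_{f,g}$, a product of invertibles, so $S_{g,\omega}^{-1}=T_{f,g}^{-1}S_{f,\tau}^{-1}T_{\tau,\omega}^{-1}$; substituting into $P_{g,\omega}=\theta_gS_{g,\omega}^{-1}\theta_\omega$ and cancelling telescopically yields $P_{g,\omega}=\theta_fS_{f,\tau}^{-1}\theta_\tau=P_{f,\tau}$.

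The substantive step is (iii) $\Rightarrow$ (ii). Set $T_{f,g}:=S_{f,\tau}^{-1}\theta_\tau\theta_g$ and $T_{\tau,\omega}:=\theta_\omega\theta_fS_{f,\tau}^{-1}$; both are bounded. From $S_{f,\tau}=\theta_\tau\theta_f$ and $S_{g,\omega}=\theta_\omega\theta_g$ one reads off the projection identities $\theta_\tau P_{f,\tau}=\theta_\tau$, $P_{f,\tau}\theta_f=\theta_f$, $P_{g,\omega}\theta_g=\theta_g$, $\theta_\omega P_{g,\omega}=\theta_\omega$. Using these together with the hypothesis $P_{f,\tau}=P_{g,\omega}$ one gets $\theta_fT_{f,g}=P_{f,\tau}\theta_g=P_{g,\omega}\theta_g=\theta_g$ and $T_{\tau,\omega}\theta_\tau=\theta_\omega P_{f,\tau}=\theta_\omega P_{g,\omega}=\theta_\omega$, which are the desired intertwining relations. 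For invertibility, take the operators obtained by swapping the roles of the two frames, namely $S_{g,\omega}^{-1}\theta_\omega\theta_f$ and $\theta_\tau\theta_gS_{g,\omega}^{-1}$, and check that both compositions with $T_{f,g}$, resp. $T_{\tau,\omega}$, collapse to $I_\mathcal{X}$ by the same identities; for instance $(S_{g,\omega}^{-1}\theta_\omega\theta_f)T_{f,g}=S_{g,\omega}^{-1}\theta_\omega P_{f,\tau}\theta_g=S_{g,\omega}^{-1}\theta_\omega\theta_g=I_\mathcal{X}$, and the three remaining products are analogous. Combined with (i) $\Leftrightarrow$ (ii), this closes the equivalence.

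For uniqueness: if $\theta_g=\theta_fT$ for some bounded $T$, left-multiplying by $S_{f,\tau}^{-1}\theta_\tau$ and using $\theta_\tau\theta_f=S_{f,\tau}$ forces $T=S_{f,\tau}^{-1}\theta_\tau\theta_g$; symmetrically $\theta_\omega=T'\theta_\tau$ forces $T'=\theta_\omega\theta_fS_{f,\tau}^{-1}$. Finally, if $(\{f_n\}_n,\{\tau_n\}_n)$ is a Parseval p-ASF then $S_{f,\tau}=I_\mathcal{X}$, so the identity from the (ii) $\Rightarrow$ (iii) step reduces to $S_{g,\omega}=T_{\tau,\omega}T_{f,g}$; hence $(\{g_n\}_n,\{\omega_n\}_n)$ is Parseval iff $T_{\tau,\omega}T_{f,g}=I_\mathcal{X}$, and because both factors are bounded invertible this is in turn equivalent to $T_{f,g}T_{\tau,\omega}=I_\mathcal{X}$. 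I expect the only real obstacle to be the bookkeeping in (iii) $\Rightarrow$ (ii): one must guess the correct two-sided inverses — forced by the symmetry between the two frames — and apply each projection identity on the correct side, the compositions running in the ``opposite'' order from the Hilbert-space proof of Theorem \ref{BALANCHARSIM} because of the convention $f_n=h_nU$, $\tau_n=Ve_n$; there is no analytic difficulty beyond the boundedness already built into the definition of a p-ASF.
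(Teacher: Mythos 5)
Your proof is correct and follows essentially the same route as the paper's: the same candidate operators $T_{f,g}=S_{f,\tau}^{-1}\theta_\tau\theta_g$, $T_{\tau,\omega}=\theta_\omega\theta_f S_{f,\tau}^{-1}$, the same projection identities under $P_{f,\tau}=P_{g,\omega}$ for (iii) $\Rightarrow$ (ii) together with the same four products establishing invertibility, and the same uniqueness computation. The only difference is that you also write out the Parseval addendum explicitly via $S_{g,\omega}=T_{\tau,\omega}S_{f,\tau}T_{f,g}$, a step the paper's written proof leaves implicit.
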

  \begin{proof}
  	 (i) $\Rightarrow $ (ii) $ \theta_gx=\{g_n(x)\}_{n}=\{f_n(T_{f,g}x)\}_{n}=\theta_f(T_{f,g}x), \forall x \in \mathcal{X}$, 	 $ \theta_\omega(\{a_n\}_{n})=\sum_{n=1}^\infty a_n\omega_n=\sum_{n=1}^\infty a_nT_{\tau,\omega}\tau_n=T_{\tau,\omega}( \theta_\tau(\{a_n\}_{n})) , \forall \{a_n\}_{n} \in \ell^p(\mathbb{N})$.\\
  	 (ii) $\Rightarrow $ (iii) $  S_{g,\omega}= \theta_\omega\theta_g=T_{\tau,\omega} \theta_\tau\theta_f T_{f,g} =T_{\tau,\omega} S_{f, \tau}T_{f,g}$ and 
  	 \begin{align*}
  	  P_{g,\omega}=\theta_g S_{g,\omega}^{-1} \theta_\omega=(\theta_f T_{f,g})(T_{\tau,\omega} S_{f, \tau}T_{f,g})^{-1}(T_{\tau,\omega} \theta_\tau)= P_{f, \tau}.
  	 \end{align*}  
  (ii) $\Rightarrow $ (i)  $ \sum_{n=1}^\infty g_n(x)e_n=\theta_g(x)=\theta_f(T_{f,g}x)=\sum_{n=1}^\infty f_n(T_{f,g}x)e_n, \forall x \in \mathcal{X}.$ This clearly gives (i).\\
  	  (iii) $\Rightarrow $ (ii) $\theta_g=P_{g,\omega} \theta_g= P_{f,\tau}\theta_g=\theta_f(S_{f,\tau}^{-1}\theta_{\tau}\theta_g)$, and $\theta_\omega=\theta_\omega P_{g,\omega}=\theta_\omega P_{f,\tau}=(\theta_\omega\theta_fS_{f,\tau}^{-1})\theta_\tau $. We  show that $S_{f,\tau}^{-1}\theta_{\tau}\theta_g$ and $\theta_\omega\theta_fS_{f,\tau}^{-1} $ are invertible. For,
  	  \begin{align*}
  	  &(S_{f,\tau}^{-1}\theta_{\tau}\theta_g)(S_{g,\omega}^{-1}\theta_{\omega}\theta_f)=S_{f,\tau}^{-1}\theta_{\tau}P_{g,\omega}\theta_f=S_{f,\tau}^{-1}\theta_{\tau} P_{f,\tau}\theta_f=I_\mathcal{X},\\
  	  &(S_{g,\omega}^{-1}\theta_{\omega}\theta_f)(S_{f,\tau}^{-1}\theta_{\tau}\theta_g)=S_{g,\omega}^{-1}\theta_{\omega} P_{f,\tau}\theta_g=S_{g,\omega}^{-1}\theta_{\omega}P_{g,\omega}\theta_g=I_\mathcal{X} 
  	  \end{align*}
  	  and 
  	  \begin{align*}
  	  &(\theta_\omega\theta_fS_{f,\tau}^{-1})(\theta_\tau\theta_gS_{g,\omega}^{-1})=\theta_\omega P_{f,\tau}\theta_gS_{g,\omega}^{-1}=\theta_\omega P_{g,\omega}\theta_gS_{g,\omega}^{-1}=I_\mathcal{X},\\
  	  &(\theta_\tau\theta_gS_{g,\omega}^{-1})(\theta_\omega\theta_fS_{f,\tau}^{-1})=\theta_\tau P_{g,\omega}\theta_fS_{f,\tau}^{-1}=\theta_\tau P_{f,\tau}\theta_fS_{f,\tau}^{-1}=I_\mathcal{X}.
  	  \end{align*}    
  	Let $T_{f,g}, T_{\tau,\omega}:\mathcal{X} \to \mathcal{X}$ be bounded invertible and $g_n=f_nT_{f, g}, \omega_n=T_{\tau,\omega}\tau_n,  \forall  n \in \mathbb{N}$. Then $\theta_g=\theta_fT_{f, g} $ says that $\theta_\tau\theta_g=\theta_\tau\theta_fT_{f, g}=S_{f,\tau}T_{f, g}  $ which implies $ T_{f, g} =S_{f,\tau}^{-1}\theta_\tau\theta_g$, and $\theta_\omega=T_{\tau,\omega}\theta_\tau $ says $\theta_\omega\theta_f=T_{\tau,\omega}\theta_\tau\theta_f=T_{\tau,\omega}S_{f,\tau} $. Hence $T_{\tau,\omega}=\theta_\omega\theta_fS_{f,\tau}^{-1} $. 
  \end{proof}
  It is easy to see that, for Hilbert spaces,  Theorem \ref{SEQUENTIALSIMILARITY} reduces to Theorem \ref{BALANCHARSIM}. \\
  In the Hilbert space frame theory, there is a twin notion associated with dual frames which is known as orthogonal frames. This was first introduced by Balan in his Ph.D. thesis \cite{BALANTHESIS} and further studied by Han, and Larson \cite{HANMEMOIRS} (also see Chapter 6 in \cite{KORNELSON}). We now define the same notion for Banach spaces.
  \begin{definition}\label{ORTHOGONALDEF}
  	Let $ (\{f_n\}_{n}, \{\tau_n\}_{n}) $ be a p-ASF for 	$\mathcal{X}$. 	A p-ASF $ (\{g_n \}_{n}, \{\omega_n \}_{n}) $ for $\mathcal{X}$ is orthogonal   for $ (\{f_n \}_{n}, \{\tau_n \}_{n}) $ if 
  	\begin{align*}
  	0=\sum_{n=1}^\infty g_n(x) \tau_n=\sum_{n=1}^\infty
  	f_n(x) \omega_n, \quad \forall x \in
  	\mathcal{X}.
  	\end{align*}
  \end{definition}
  Unlike duality, the notion orthogonality is symmetric but not reflexive. Further, dual p-ASFs cannot be orthogonal to each other and orthogonal p-ASFs cannot be dual to each other. Moreover,  if $ (\{g_n\}_{n}, \{\omega_n\}_n)$ is orthogonal for $ (\{f_n\}_{n}, \{\tau_n\}_n)$, then  both $ (\{f_n\}_{n}, \{\omega_n\}_n)$ and $ (\{g_n\}_{n}, \{\tau_n\}_n)$ are not p-ASFs. Just like Proposition \ref{ORTHOGONALPRO} we have the following proposition.
  \begin{proposition}
  	For two p-ASFs $ (\{f_n\}_{n}, \{\tau_n\}_{n}) $ and $ (\{g_n \}_{n}, \{\omega_n \}_{n}) $ for $\mathcal{X}$, the following are equivalent.
  	\begin{enumerate}[\upshape(i)]
  		\item  $ (\{g_n \}_{n}, \{\omega_n \}_{n}) $ is  orthogonal  for $ (\{f_n \}_{n}, \{\tau_n \}_{n}) $.
  		\item $\theta_\tau\theta_g =\theta_\omega\theta_f =0$.
  	\end{enumerate}
  \end{proposition}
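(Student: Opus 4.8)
The plan is to unwind the definitions of the analysis and synthesis operators and to observe that the displayed identity in Definition \ref{ORTHOGONALDEF} is literally the assertion that two specific bounded operators on $\mathcal{X}$ vanish. This makes the proposition the verbatim analogue of Proposition \ref{ORTHOGONALPRO}, with the identity operator $I_\mathcal{X}$ there replaced here by the zero operator.

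First I would record that, since $ (\{f_n\}_{n}, \{\tau_n\}_{n}) $ and $ (\{g_n \}_{n}, \{\omega_n \}_{n}) $ are p-ASFs, the analysis operators $\theta_f,\theta_g:\mathcal{X}\to\ell^p(\mathbb{N})$ and the synthesis operators $\theta_\tau,\theta_\omega:\ell^p(\mathbb{N})\to\mathcal{X}$ are all well-defined bounded linear maps by Definition \ref{PASFDEF}; consequently the compositions $\theta_\tau\theta_g:\mathcal{X}\to\mathcal{X}$ and $\theta_\omega\theta_f:\mathcal{X}\to\mathcal{X}$ are well-defined bounded linear operators, so the equations appearing in (ii) are meaningful operator equations. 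Then, for a fixed $x\in\mathcal{X}$, using (\ref{ALIGNP}) and (\ref{AAA}) I would compute
\[
\theta_\tau\theta_g x=\theta_\tau(\{g_n(x)\}_n)=\sum_{n=1}^\infty g_n(x)\tau_n,\qquad
\theta_\omega\theta_f x=\theta_\omega(\{f_n(x)\}_n)=\sum_{n=1}^\infty f_n(x)\omega_n .
\]
From these two identities the equivalence is immediate: a bounded linear operator on $\mathcal{X}$ is the zero operator precisely when it maps every $x\in\mathcal{X}$ to $0$, and by the two displays above this happens exactly when $0=\sum_{n=1}^\infty g_n(x)\tau_n=\sum_{n=1}^\infty f_n(x)\omega_n$ for all $x\in\mathcal{X}$, i.e. exactly when $ (\{g_n \}_{n}, \{\omega_n \}_{n}) $ is orthogonal for $ (\{f_n \}_{n}, \{\tau_n \}_{n}) $ in the sense of Definition \ref{ORTHOGONALDEF}.

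There is no genuine obstacle here; the only point worth stating explicitly is that the p-ASF hypothesis is exactly what legitimizes $\theta_\tau\theta_g$ and $\theta_\omega\theta_f$ as bounded operators, so that ``equals $0$'' is a bona fide operator equation rather than merely a pointwise one. Accordingly I would present the proof in a single short paragraph mirroring the argument for Proposition \ref{ORTHOGONALPRO}.
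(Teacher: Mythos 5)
Your argument is correct and is precisely the definition-unwinding the paper intends: it states this result without proof as the analogue of Proposition \ref{ORTHOGONALPRO}, and your computation $\theta_\tau\theta_g x=\sum_{n=1}^\infty g_n(x)\tau_n$, $\theta_\omega\theta_f x=\sum_{n=1}^\infty f_n(x)\omega_n$ is exactly the implicit justification. Nothing is missing; the remark that the p-ASF hypothesis guarantees these compositions are well-defined bounded operators is the only point of substance, and you state it.
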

Usefulness of orthogonal frames is that we have interpolation result, i.e., these frames can be stitched along certain curves (in particular, on the unit circle centred at the origin) to get new frames.
\begin{theorem}\label{INTERPOLATION}
	Let $ (\{f_n\}_{n}, \{\tau_n\}_{n}) $ and $ (\{g_n \}_{n}, \{\omega_n \}_{n}) $ be  two Parseval p-ASFs for  $\mathcal{X}$ which are  orthogonal. If $A,B,C,D :\mathcal{X}\to \mathcal{X}$ are bounded linear  operators and  $ CA+DB=I_\mathcal{X}$, then  
	\begin{align*}
	(\{f_nA+g_nB\}_{n}, \{C\tau_n+D\omega_n\}_{n})
	\end{align*}
	 is a  Parseval p-ASF for  $\mathcal{X}$. In particular,  if scalars $ a,b,c,d$ satisfy $ca+db =1$, then 
	$ (\{af_n+bg_n\}_{n}, \{c\tau_n+d\omega_n\}_{n}) $ is a  Parseval p-ASF for  $\mathcal{X}$.
\end{theorem}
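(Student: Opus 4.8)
The plan is to verify directly that the pair $(\{f_nA+g_nB\}_n, \{C\tau_n+D\omega_n\}_n)$ has frame operator equal to the identity, which is exactly the condition for being a Parseval p-ASF (an ASF whose frame operator is $I_{\mathcal{X}}$ and which is simultaneously a p-ASF). First I would check the p-ASF part: the analysis operator of the new system is $x\mapsto\{(f_nA+g_nB)(x)\}_n=\theta_f(Ax)+\theta_g(Bx)$, which is bounded from $\mathcal{X}$ to $\ell^p(\mathbb{N})$ since $\theta_f,\theta_g$ are bounded (both systems are p-ASFs) and $A,B$ are bounded; similarly the synthesis operator is $\{a_n\}_n\mapsto C\theta_\tau\{a_n\}_n+D\theta_\omega\{a_n\}_n$, bounded because $\theta_\tau,\theta_\omega$ are bounded and so are $C,D$. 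So boundedness of both operators is immediate and the only real content is the frame operator computation.

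Next I would compute the frame operator of the new system acting on $x\in\mathcal{X}$:
\begin{align*}
\sum_{n=1}^\infty (f_nA+g_nB)(x)(C\tau_n+D\omega_n)
&=\sum_{n=1}^\infty f_n(Ax)C\tau_n+\sum_{n=1}^\infty f_n(Ax)D\omega_n\\
&\quad+\sum_{n=1}^\infty g_n(Bx)C\tau_n+\sum_{n=1}^\infty g_n(Bx)D\omega_n.
\end{align*}
The four sums are handled by pulling the bounded operators $C,D$ outside the (convergent) series and invoking the hypotheses. Since $(\{f_n\}_n,\{\tau_n\}_n)$ is a Parseval p-ASF, $\sum_n f_n(Ax)\tau_n=Ax$, so the first sum is $C(Ax)=CAx$; likewise, since $(\{g_n\}_n,\{\omega_n\}_n)$ is Parseval, the fourth sum is $D(Bx)=DBx$. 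For the two cross terms I use orthogonality: by Definition \ref{ORTHOGONALDEF}, $\sum_n g_n(y)\tau_n=0$ and $\sum_n f_n(y)\omega_n=0$ for all $y\in\mathcal{X}$; applying this with $y=Bx$ and $y=Ax$ respectively kills the second and third sums. Hence the total is $CAx+DBx=(CA+DB)x=x$ by hypothesis, so the frame operator is $I_{\mathcal{X}}$, which is what we wanted.

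The scalar special case follows by taking $A=aI_{\mathcal{X}}$, $B=bI_{\mathcal{X}}$, $C=cI_{\mathcal{X}}$, $D=dI_{\mathcal{X}}$; then $f_nA+g_nB=af_n+bg_n$, $C\tau_n+D\omega_n=c\tau_n+d\omega_n$, and $CA+DB=(ca+db)I_{\mathcal{X}}=I_{\mathcal{X}}$ since $ca+db=1$. There is no serious obstacle here; the one point needing a little care is the interchange of the bounded operators $C,D$ with the infinite sums, but this is justified by continuity of $C,D$ together with the convergence of $\sum_n f_n(x)\tau_n$ and $\sum_n g_n(x)\omega_n$ guaranteed by the ASF hypothesis (equivalently, by rewriting everything through the synthesis operators $\theta_\tau,\theta_\omega$ and using $C\theta_\tau,D\theta_\omega$ directly). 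So the proof is essentially a direct four-term expansion plus the Parseval and orthogonality identities.
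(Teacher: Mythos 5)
Your proof is correct and follows essentially the same route as the paper: both expand the frame operator of the new pair into four terms, use the Parseval identities for the diagonal terms and orthogonality for the cross terms, and conclude it equals $I_{\mathcal{X}}$ (the paper just phrases the computation entirely at the level of the operators $\theta_f,\theta_g,\theta_\tau,\theta_\omega$, which you also note as the equivalent formulation). No gaps.
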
 
\begin{proof}
	By a calculation we find  
	\begin{align*}
	\theta_{fA+gB} x = \{(f_nA+g_nB)(x) \}_{n}=\{f_n(Ax) \}_{n}+\{g_n(Bx) \}_{n}=\theta_f(Ax)+\theta_g(Bx), \quad \forall x \in \mathcal{X}
	\end{align*}
	 and 
	 \begin{align*}
	 \theta_{C\tau+D\omega}(\{a_n \}_{n})=\sum_{n=1}^\infty a_n(C\tau_n+D\omega_n)=C\theta_\tau(\{a_n \}_{n})+D\theta_\omega(\{a_n \}_{n}), \quad  \forall \{a_n\}_n  \in \ell^p(\mathbb{N}). 
	 \end{align*} 
	  So 	
	\begin{align*}
	S_{fA+gB,C\tau+D\omega} &=\theta_{C\tau+D\omega} \theta_{fA+gB}= ( C\theta_\tau+ D\theta_\omega)(\theta_fA+\theta_gB)\\
	&=C\theta_\tau\theta_fA+C\theta_\tau\theta_gB+D\theta_\omega\theta_fA+D\theta_\omega\theta_gB\\
	&=CS_{f,\tau}A+0+0+DS_{g,\omega}B
	=CI_\mathcal{X}A+DI_\mathcal{X}B=I_\mathcal{X}.
	\end{align*}
\end{proof} 
Using Theorem \ref{SEQUENTIALSIMILARITY} we finally relate three notions : duality, similarity and orthogonality.
  \begin{proposition}\label{LASTONE}
  	For every p-ASF $(\{f_n\}_{n}, \{\tau_n\}_{n})$, the canonical dual for $(\{f_n\}_{n}, \{\tau_n\}_{n})$ is the only dual p-ASF that is similar to $(\{f_n\}_{n}, \{\tau_n\}_{n})$.
  \end{proposition}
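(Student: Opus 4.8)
The plan is to combine the uniqueness clause of Theorem~\ref{SEQUENTIALSIMILARITY} with the operator-theoretic description of duality in Proposition~\ref{ORTHOGONALPRO}. First I would record that the canonical dual $(\{f_nS_{f,\tau}^{-1}\}_{n}, \{S_{f,\tau}^{-1}\tau_n\}_{n})$ really is similar to $(\{f_n\}_{n}, \{\tau_n\}_{n})$: taking the bounded invertible operators $T_{f,g}=T_{\tau,\omega}=S_{f,\tau}^{-1}$, one has $f_nS_{f,\tau}^{-1}=f_nT_{f,g}$ and $S_{f,\tau}^{-1}\tau_n=T_{\tau,\omega}\tau_n$ for all $n$, so the canonical dual is at once a dual (as already noted after Definition~\ref{SIMILARITYMINE}) and similar. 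This gives the existence half.

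For uniqueness, suppose $(\{g_n\}_{n}, \{\omega_n\}_{n})$ is a dual p-ASF for $(\{f_n\}_{n}, \{\tau_n\}_{n})$ that is also similar to it. Similarity together with the uniqueness statement in Theorem~\ref{SEQUENTIALSIMILARITY} forces the two similarity operators to be $T_{f,g}=S_{f,\tau}^{-1}\theta_\tau\theta_g$ and $T_{\tau,\omega}=\theta_\omega\theta_fS_{f,\tau}^{-1}$, with $g_n=f_nT_{f,g}$ and $\omega_n=T_{\tau,\omega}\tau_n$ for every $n$. On the other hand, duality of $(\{g_n\}_{n}, \{\omega_n\}_{n})$ for $(\{f_n\}_{n}, \{\tau_n\}_{n})$ means, by Proposition~\ref{ORTHOGONALPRO}, that $\theta_\tau\theta_g=\theta_\omega\theta_f=I_\mathcal{X}$. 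Plugging these identities into the two formulas collapses them to $T_{f,g}=S_{f,\tau}^{-1}$ and $T_{\tau,\omega}=S_{f,\tau}^{-1}$, so $g_n=f_nS_{f,\tau}^{-1}$ and $\omega_n=S_{f,\tau}^{-1}\tau_n$ for all $n$; that is, $(\{g_n\}_{n}, \{\omega_n\}_{n})$ coincides with the canonical dual.

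I do not anticipate a substantive obstacle: the statement is essentially a corollary of the two earlier results, and the only point requiring care is to invoke the right objects — namely that Proposition~\ref{ORTHOGONALPRO} characterizes duality precisely through $\theta_\tau\theta_g=\theta_\omega\theta_f=I_\mathcal{X}$, while Theorem~\ref{SEQUENTIALSIMILARITY} both certifies similarity and pins down the unique pair $(T_{f,g},T_{\tau,\omega})$ by the displayed formulas. It is worth emphasizing in the write-up that the computation shows a dual can be similar to the original frame \emph{only} via $S_{f,\tau}^{-1}$ in both slots, so there is no residual freedom in the similarity operators.
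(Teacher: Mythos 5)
Your proposal is correct and follows essentially the same route as the paper: invoke the uniqueness formulas $T_{f,g}=S_{f,\tau}^{-1}\theta_\tau\theta_g$ and $T_{\tau,\omega}=\theta_\omega\theta_fS_{f,\tau}^{-1}$ from Theorem~\ref{SEQUENTIALSIMILARITY}, then use the duality identities $\theta_\tau\theta_g=\theta_\omega\theta_f=I_\mathcal{X}$ to collapse both operators to $S_{f,\tau}^{-1}$, identifying the dual with the canonical one. Your explicit verification that the canonical dual is itself similar to the original frame is a small addition the paper leaves implicit, but it does not change the argument.
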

  \begin{proof}
  	Let us suppose that  two p-ASFs $(\{f_n\}_{n}, \{\tau_n\}_{n})$ and $ (\{g_n \}_{n}, \{\omega_n \}_{n}) $  are similar and dual to each other. Then there exist bounded invertible operators  $T_{f,g}, T_{\tau,\omega} :\mathcal{X}\to \mathcal{X}$  such that $ g_n=f_nT_{f,g},\omega_n=T_{\tau,\omega}\tau_n ,\forall n \in \mathbb{N}$. Theorem \ref{SEQUENTIALSIMILARITY} then gives
  	\begin{align*}
  	T_{f,g}=S_{f,\tau}^{-1}\theta_\tau\theta_g=S_{f,\tau}^{-1}I_\mathcal{X}=S_{f,\tau}^{-1}\text{ and }T_{\tau, \omega}=\theta_\omega\theta_fS_{f,\tau}^{-1}=I_\mathcal{X}S_{f,\tau}^{-1}=S_{f,\tau}^{-1}.
  	\end{align*} 
  	 Hence $ (\{g_n \}_{n}, \{\omega_n \}_{n}) $ is the canonical dual for  $(\{f_n\}_{n}, \{\tau_n\}_{n})$.	
  \end{proof}
  \begin{proposition}\label{LASTTWO}
  	Two similar  p-ASFs cannot be orthogonal.
  \end{proposition}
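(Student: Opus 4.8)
The plan is to argue by contradiction, leveraging the operator dictionary established in Theorem~\ref{SEQUENTIALSIMILARITY} together with the frame-operator splitting from Theorem~\ref{OURS}. Suppose, toward a contradiction, that two p-ASFs $(\{f_n\}_{n}, \{\tau_n\}_{n})$ and $(\{g_n \}_{n}, \{\omega_n \}_{n})$ for $\mathcal{X}$ are simultaneously similar and orthogonal. First I would unpack similarity: by the equivalence $\operatorname{(i)}\Leftrightarrow\operatorname{(ii)}$ of Theorem~\ref{SEQUENTIALSIMILARITY} there are bounded invertible operators $T_{f,g}, T_{\tau,\omega}:\mathcal{X}\to\mathcal{X}$ with $\theta_g=\theta_f T_{f,g}$ and $\theta_\omega=T_{\tau,\omega}\theta_\tau$.

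Next I would unpack orthogonality: the defining identity $0=\sum_{n=1}^\infty g_n(x)\tau_n=\theta_\tau(\{g_n(x)\}_n)=\theta_\tau\theta_g x$ for every $x\in\mathcal{X}$ says precisely that $\theta_\tau\theta_g=0$. Substituting $\theta_g=\theta_f T_{f,g}$ and invoking the splitting $S_{f,\tau}=\theta_\tau\theta_f$ from Theorem~\ref{OURS}$\operatorname{(v)}$ then yields
\begin{align*}
0=\theta_\tau\theta_g=\theta_\tau\theta_f\,T_{f,g}=S_{f,\tau}T_{f,g}.
\end{align*}
Since $S_{f,\tau}$ and $T_{f,g}$ are both bounded invertible, so is the composition $S_{f,\tau}T_{f,g}$, and hence it cannot be the zero operator (as $\mathcal{X}\ne\{0\}$). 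This contradiction completes the argument.

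There is no real obstacle here: once the similarity and orthogonality hypotheses are translated into operator identities, the conclusion is immediate from the invertibility of $S_{f,\tau}$. The only point worth a moment's care is the nontriviality of $\mathcal{X}$; should one prefer, the identical reasoning applied to the other half, $0=\theta_\omega\theta_f=T_{\tau,\omega}\theta_\tau\theta_f=T_{\tau,\omega}S_{f,\tau}$, gives the same contradiction.
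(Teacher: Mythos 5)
Your proposal is correct and follows essentially the same route as the paper: translate similarity into the operator identity $\theta_g=\theta_f T_{f,g}$ via Theorem~\ref{SEQUENTIALSIMILARITY}, translate orthogonality into $\theta_\tau\theta_g=0$, and conclude from $\theta_\tau\theta_g=S_{f,\tau}T_{f,g}$ being invertible (hence nonzero) that the two properties are incompatible. The only difference is presentational — you phrase it as an explicit contradiction and note the nontriviality of $\mathcal{X}$, which the paper leaves implicit.
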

  \begin{proof}
  	Let $(\{f_n\}_{n}, \{\tau_n\}_{n})$ and $ (\{g_n \}_{n}, \{\omega_n \}_{n}) $  be two p-ASFs which are similar.  Then there exist bounded  invertible  operators $T_{f,g}, T_{\tau,\omega} :\mathcal{X}\to \mathcal{X}$  such that $ g_n=f_nT_{f,g},\omega_n=T_{\tau,\omega}\tau_n ,\forall n \in \mathbb{N}$.  Theorem \ref{SEQUENTIALSIMILARITY} then says   $\theta_g=\theta_f T_{f,g}, \theta_\omega=T_{\tau,\omega}\theta_\tau  $.    Therefore 
  	\begin{align*}
  	\theta_\tau \theta_g=\theta_\tau\theta_f T_{f,g}=S_{f,\tau}T_{f,g}\neq0. 
  	\end{align*}
  \end{proof}
We end the paper with a remark.
  \begin{remark}
  	For every p-ASF  $(\{f_n\}_{n}, \{\tau_n\}_{n}),$ both  p-ASFs
  	\begin{align*}
   (\{f_n{S}_{f, \tau}^{-1}\}_{n}, \{\tau_n\}_{n})	 \text{ and }  (\{f_n\}_{n}, \{{S}_{f, \tau}^{-1}\tau_n\}_{n})
  	\end{align*}   
  	are  Parseval p-ASFs and are  similar to  $(\{f_n\}_{n}, \{\tau_n\}_{n})$.  Therefore  each p-ASF is similar to  Parseval p-ASFs.
  \end{remark}

  \section{Acknowledgements}
  We thank Prof. Radu Balan, University of Maryland, USA for sending a copy of his Ph.D. thesis \cite{BALANTHESIS}. We also thank Prof. D. Freeman,  St. Louis University, USA for providing a copy of the reference \cite{THOMAS} to us. First author thanks National Institute of Technology (NITK) Surathkal for
  financial assistance.

 \bibliographystyle{plain}
 \bibliography{reference.bib}

\end{document}